\newtheorem{theorem}{Theorem}[section]
\newtheorem{lemma}[theorem]{Lemma}
\newtheorem{prop}[theorem]{Proposition}
\newtheorem{corollary}[theorem]{Corollary}
\newtheorem{example}[theorem]{Example}
\numberwithin{equation}{section}
\newcommand{\R}{{\mathbb{R}}}
\renewcommand{\d}{\partial}
\newcommand{\lin}{{\operatorname{lin }}}
\newcommand{\aff}{{\operatorname{aff }}}
\newcommand{\inte}{{\operatorname{int }}}
\newcommand{\cl}{{\operatorname{cl}}}
\newcommand{\ve}{{\varepsilon}}
\newcommand{\pa}{{\partial}}
\newcommand{\relbd}{{\operatorname{relbd}}}
\newcommand{\Om}{{\Omega}}
\newcommand{\om}{{\omega}}
\newcommand{\veps}{{\varepsilon}}
\begin{document}

\title[Smoothness in the  $L_p$ Minkowski problem]{Smoothness in the  $L_p$ Minkowski problem \\ for $p<1$}
\author[Gabriele Bianchi, K\'aroly J. B\"or\"oczky, Andrea Colesanti]
{Gabriele Bianchi, K\'aroly J. B\"or\"oczky, and Andrea Colesanti}
\address{Dipartimento di Matematica e Informatica ``U. Dini", Universit\`a di Firenze, Viale Morgagni 67/A, Firenze, Italy I-50134} \email{gabriele.bianchi@unifi.it}
\address{Alfr\'ed R\'enyi Institute of Mathematics, Hungarian Academy
  of Sciences, Reltanoda u. 13-15, H-1053 Budapest, Hungary, and
Department of Mathematics, Central European University, Nador u 9, H-1051, Budapest, Hungary} \email{boroczky.karoly.j@renyi.mta.hu}
\address{Dipartimento di Matematica e Informatica ``U. Dini", Universit\`a di Firenze, Viale Morgagni 67/A, Firenze, Italy I-50134} \email{andrea.colesanti@unifi.it}
\subjclass[2010]{Primary: 52A40, secondary: 35J96}
\keywords{$L_{p}$ Minkowski problem, Monge-Amp\`{e}re equation}
\thanks{First and third authors are supported in part by the Gruppo Nazionale per l'Analisi Matematica, la Probabilit\`a e le loro Applicazioni (GNAMPA) of the Istituto Nazionale di Alta Matematica (INdAM). Second author is supported in part by
NKFIH grants 116451 and 109789.}

\begin{abstract} 
We discuss the smoothness and strict convexity of the solution of the $L_p$-Minkowski problem when $p<1$
and the given measure has a positive density function.
\end{abstract}

\maketitle

\section{Introduction}

Given  $K$ in the class $\mathcal {K}_0^n$ of compact convex sets in $\R^n$ that have non-empty interior and  contain the origin $o$, we write $h_K$ and $S_K$ to denote its support function and its surface area measure, respectively, 
and for $p\in\R$, $S_{K,p}$ to denote its  $L_p$-area measure, where $d S_{K,p}=h_K^{1-p}dS_K$.
The $L_p$-area measure defined by Lutwak \cite{Lut} is a central notion in convexity, see say 
Barthe, Gu\'{e}don, Mendelson and Naor \cite{BG}, B\"{o}r\"{o}czky, Lutwak, Yang and Zhang \cite{BLYZ2}, 
Campi and Gronchi \cite{CG},  Chou \cite{KSC},
Cianchi, Lutwak, Yang and Zhang \cite{CLYZ}, Gage and Hamilton \cite{GH}, Haberl and Parapatits \cite{HP},  Haberl and Schuster \cite{HS1,HS2},  Haberl, Schuster and Xiao \cite{HSX}, He, Leng and Li \cite{HLK}, 
Henk and Linke \cite{HENK}, Ludwig \cite{LU2}, Lutwak, Yang and Zhang \cite{LYZ1,LYZ4}, Naor \cite{NAO}, Naor and Romik \cite{NR}, 
Paouris \cite{PAO}, Paouris and Werner \cite{PW} and Stancu \cite{ST3}.

The $L_p$ Minkowski problem asks for the existence of a convex body $K\in\mathcal{K}_{0}^n$ whose $L_p$ area measure is a given finite Borel measure $\nu$ on $S^{n-1}$. 
When $p=1$ this is the classical Minkowski problem solved by Minkowski \cite{MIN} for polytopes, and by Alexandrov \cite{Ale38} and Fenchel and Jessen \cite{FeJ38} in general. The smoothness of the solution was clarified in a series of papers by Nirenberg \cite{NIR}, Cheng and Yau \cite{CY}, Pogorelov \cite{POG} and Caffarelli \cite{Caf90a,Caf90b}.
For $p>1$ and $p\neq n$, the $L_p$ Minkowski problem has a unique solution
according to Chou and Wang~\cite{CW}, Guan and Lin \cite{GL} and
Hug, Lutwak, Yang and Zhang \cite{HLYZ2}. 
The smoothness of the solution is discussed in Chou and Wang \cite{CW}, Huang and Lu \cite{HL1} and Lutwak and Oliker \cite{LO}.
In addition, the  case $p<1$ has been intensively investigated by B\"{o}r\"{o}czky, Lutwak, Yang and Zhang \cite{BLYZ}, B\"or\"oczky and Hai T. Trinh \cite{BoT},  Chen \cite{WC},  Chen, Li and Zhu \cite{CSL0,CSL01}, Ivaki \cite{Iva13}, Jiang \cite{JI}, Lu and Wang \cite{LWA}, Lutwak, Yang and Zhang \cite{LYZ5}, Stancu \cite{ST1,ST2} and Zhu \cite{Z2,Z3,Zhu15,Zhu16}.

The solution of the $L_p$-Minkowski problem may not be unique for $p<1$
according to Chen, Li and Zhu \cite{CSL01} if $0<p<1$, according to Stancu \cite{ST2} if $p=0$, and according to Chou and Wang \cite{CW} if $p<0$ small.
 
In this paper we are interested in this problem when $p<1$ and $\nu$ is a measure with density with respect to the Hausdorff measure  $\mathcal{H}^{n-1}$ on $S^{n-1}$, i.e. in the problem
\begin{equation}\label{Kdensityfunction}
dS_{K,p}=f\,d{\mathcal H}^{n-1}\quad\text{ on $S^{n-1}$,}
\end{equation}
where $f$ is a non-negative Borel function in $S^{n-1}$.

According to Chou and Wang~\cite{CW}, if $-n<p<1$ and the Borel function $f$ is bounded from above and below by positive constants, then~\eqref{Kdensityfunction} has a solution. More general existence results are provided by the recent works Chen, Li and Zhu \cite{CSL0} if $p=0$, Chen, Li and Zhu \cite{CSL01} if $0< p<1$, and  Bianchi, B\"or\"oczky and Colesanti~\cite{BiBoCoY} if $-n<p<0$. In particular, it is known that \eqref{Kdensityfunction} has a solution 
 if $0\leq p<1$ and
$f$ is any non-negative function in  $L_1(S^{n-1})$ with $\int_{S^{n-1}}f\,d{\mathcal H}^{n-1}>0$,
and if $-n <p<0$ and
$f$ is any non-negative function in  $L_{\frac{n}{n+p}}(S^{n-1})$ with $\int_{S^{n-1}}f\,d{\mathcal H}^{n-1}>0$.

We observe that $h$ is a  non-negative positively $1$-homogeneous convex function in $\R^n$ which solves the Monge-Amp\`ere equation 
\begin{equation}\label{MongeAmpere_Sn}
h^{1-p}\det(\nabla^2h+h I)=f \quad \text{ on $S^{n-1}$}
\end{equation} 
in the sense of measure if and only if 
$h$ is the support function of a convex body $K\in{\mathcal K}_{0}^n$ which is the solution of~\eqref{Kdensityfunction} (see Section~\ref{secPreliminaries}). 
Here $h$ is the unknown  non-negative (support) function on $S^{n-1}$ to be found, $\nabla^2 h$ denote the (covariant) Hessian matrix of $h$ with respect to an orthonormal frame on $S^{n-1}$, and $I$ is the identity matrix.
The function $h$ may vanish somewhere even in the case when $f$ is positive and continuous, and when this happens and $p<1$ the equation~\eqref{MongeAmpere_Sn} is singular at the zero set of $h$.
Naturally, if $h$ is $C^2$, then (\ref{MongeAmpere_Sn}) is a proper Monge-Amp\`ere equation.

In this paper we study the smoothness and strict convexity of a solution $K\in{\mathcal K}_{0}^n$ of~\eqref{Kdensityfunction}  assuming $\tau_2>f>\tau_1$ for some constants $\tau_2>\tau_1>0$.
Concerning these aspects for $p<1$, we summarize the known results in Theorem~\ref{th_regularity}, and the new results in Theorem~\ref{th_regularity-new}. 
 
We say that $x\in\partial K$ is a $C^1$-smooth point if there is a unique tangent hyperplane to $K$ at $x$, and observe that
$\partial K$ is $C^1$ if and only if each $x\in\partial K$ is $C^1$-smooth  (see Section~\ref{secPreliminaries} for all definitions). In addition, we note that $h_K$ is $C^1$ on $S^{n-1}$ if and only if $K$ is strictly convex, and $h_K$ is strictly convex on any hyperplane avoiding the origin if and only if $\partial K$ is $C^1$.
For $z\in\partial K$, the exterior normal cone at $z$ is denoted by $N(K,z)$, and for $z\in{\rm int}\,K$, we set $N(K,z)=\{o\}$. Theorem~\ref{th_regularity} (i) and (ii) are essentially due to
Caffarelli \cite{Caf90a} (see Theorem~\ref{Caffarelli-smooth}), and Theorem~\ref{th_regularity} (iii) is due to Chou and Wang \cite{CW}. If the function $f$ in \eqref{Kdensityfunction} is $C^\alpha$ for $\alpha>0$, then
Caffarelli \cite{Caf90b} proves (iv). 
 
\begin{theorem}[Caffarelli, Chou, Wang]
\label{th_regularity}
If $K\in{\mathcal K}_{0}^n$ is a solution of   \eqref{Kdensityfunction} for $n\geq 2$ and $p<1$, and $f$ is bounded from above and below by positive constants, then the following assertions hold:
\begin{enumerate}[(i)] 
\item\label{th_regularity_a_} The set $X_0$ of the points $x\in \partial K$ with $N(K,x)\subset N(K,o)$ is closed, 
each point of $X=\partial K\backslash X_0$ is $C^1$-smooth and $X$ contains no segment.
\item \label{th_regularity_b_} If  $o\in\partial K$ is a $C^1$-smooth point, then $\partial K$ is $C^1$.
\item \label{th_regularity_c_} If $p\leq 2-n$, then $o\in{\rm int}\,K$, and hence $K$ is strictly convex and $\partial K$ is $C^1$.
\item \label{th_regularity_x_} If $o\in{\rm int}\,K$ and the function $f$ in \eqref{Kdensityfunction} is positive and $C^\alpha$, for some $\alpha>0$, then $\partial K$ is $C^{2,\alpha}$.
\end{enumerate} 
\end{theorem}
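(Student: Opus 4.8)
The plan is to deduce (i) and (ii) from Caffarelli's strict convexity and $C^{1,\alpha}$ regularity theorem (Theorem~\ref{Caffarelli-smooth}) together with elementary properties of normal cones, to obtain the first assertion of (iii) from an integrability estimate for the Monge-Amp\`ere measure and the remaining part of (iii) again from (i), and to prove (iv) by invoking Caffarelli's higher regularity \cite{Caf90b}. Throughout we use that $N(K,o)=\{u\in S^{n-1}:h_K(u)=0\}$ since $o\in K$, and that, since $f\geq\tau_1>0$ and $1-p>0$, equation \eqref{MongeAmpere_Sn} forces $\mathcal H^{n-1}(\{h_K=0\})=0$ and $dS_K=nf\,h_K^{p-1}\,d\mathcal H^{n-1}$ on $\{h_K>0\}$.

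For (i), let $x\in X$, so some unit vector $v\in N(K,x)$ has $h_K(v)>0$; then $h_K>0$ on an open set $\Omega\ni v$, and on $\Omega$ equation \eqref{MongeAmpere_Sn} reads $\det(\nabla^2 h_K+h_KI)=nf\,h_K^{p-1}$ with right-hand side bounded between positive constants. By Theorem~\ref{Caffarelli-smooth}, $K$ is strictly convex and smooth along the boundary portion with normals in $\Omega$: every face $F(K,u)$ with $u\in\Omega$ is a point, and each support point of $K$ in a direction of $\Omega$ is a smooth point of $\partial K$. In particular $x$ is smooth and $N(K,x)=\mathbb R_{\geq0}v$, so by upper semicontinuity of the normal cone map there is a $\partial K$-neighborhood $V$ of $x$ with $N(K,y)\cap S^{n-1}\subset\Omega$ for $y\in V$, whence $N(K,y)\not\subset N(K,o)$, i.e.\ $V\subset X$. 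Thus $X$ is open, $X_0$ is closed, and every point of $X$ is smooth. Finally, if $[a,b]\subset X$ with $a\neq b$, then its midpoint $m\in X$ has a unit normal $w$ with $h_K(w)>0$; since a linear functional attaining its maximum over $K$ at the interior point $m$ of $[a,b]$ attains it also at $a$ and $b$, we have $w\in N(K,a)\cap N(K,b)$, so $a,b\in F(K,w)$, contradicting that $F(K,w)$ is a point.

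For (ii): if $o\in\partial K$ is smooth then $N(K,o)$ is a single ray, so for $x\in X_0$ the cone $N(K,x)$ is a nonzero subcone of that ray, hence equals it, so $x$ is smooth; with (i), all of $\partial K$ is smooth. For (iii), suppose first $o\in\partial K$ and fix a unit $v_0\in N(K,o)$; then $h_K(v_0)=0$, and with $K$ contained in the ball of radius $R$ about $o$ we get $h_K(u)=h_K(u)-h_K(v_0)\leq R\,|u-v_0|$ near $v_0$, hence $h_K(u)^{p-1}\geq(R\,|u-v_0|)^{p-1}$ since $p-1<0$. Integrating in geodesic polar coordinates centred at $v_0$, using $\mathcal H^{n-1}(\{h_K=0\})=0$, we obtain for $p\leq 2-n$
\[
S_K(S^{n-1})\ \geq\ n\tau_1\!\!\int_{\{h_K>0\}}\!\! h_K^{p-1}\,d\mathcal H^{n-1}\ \geq\ c\!\int_0^\delta r^{\,n+p-3}\,dr\ =\ +\infty ,
\]
contradicting the finiteness of the surface area of $K$; hence $o\in\operatorname{int}K$. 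Then $N(K,o)=\{o\}$, so $X_0=\emptyset$ and $X=\partial K$, and (i) shows $K$ is smooth and contains no boundary segment, i.e.\ is smooth and strictly convex.

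For (iv), since $o\in\operatorname{int}K$ we have $\rho\leq h_K\leq R$ on $S^{n-1}$ for some $\rho>0$, and $K$ is smooth and strictly convex by (i); being convex, $h_K$ is locally Lipschitz, so $h_K^{p-1}$ is locally Lipschitz, and therefore the right-hand side $nf\,h_K^{p-1}$ of $\det(\nabla^2 h_K+h_KI)=nf\,h_K^{p-1}$ is $C^{0,\alpha}$ and bounded between positive constants. By Caffarelli's interior $C^{2,\alpha}$ regularity \cite{Caf90b} (applicable because $K$ is strictly convex) we get $h_K\in C^{2,\alpha}$, and since $\nabla^2 h_K+h_KI$ is then positive definite (its determinant being positive), $\partial K\in C^{2,\alpha}$ by the standard correspondence between the regularity of a support function and that of the boundary. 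The substantive inputs are Caffarelli's strict convexity and $C^{1,\alpha}$ theorem (Theorem~\ref{Caffarelli-smooth}) and his higher regularity estimates \cite{Caf90b}; the passages to (i)--(iv) are routine convex geometry together with the elementary divergence estimate in (iii), which is also where the borderline exponent $2-n$ enters.
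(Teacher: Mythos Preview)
Your overall plan matches the paper's: Caffarelli's localization theorem for (i), elementary normal-cone reasoning for (ii), an integrability blow-up for (iii), and Caffarelli's $C^{2,\alpha}$ estimates for (iv). Parts (ii) and (iv) are fine, and your argument for (iii) is actually cleaner than the paper's Lemma~\ref{oinside}: you use directly that $\mathcal H^{n-1}(\{h_K=0\})=0$ and integrate $|u-v_0|^{p-1}$ over a full geodesic ball, whereas the paper first carves out a half-space at an extremal ray of $N(K,o)$ to guarantee $h_K>0$ on the domain of integration.

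There is, however, a genuine gap in your proof of (i). You pick an arbitrary $v\in N(K,x)$ with $h_K(v)>0$, take a small open $\Omega\ni v$ with $h_K>0$ on $\Omega$, and then assert that Theorem~\ref{Caffarelli-smooth} yields strict convexity of the (transferred) support function on $\Omega$, hence smoothness of $x$. But Theorem~\ref{Caffarelli-smooth}~(i) only says that the contact set $S'=\{\tilde v=0\}$ (with $\tilde v(y)=h_K(y+e)-\langle x,y+e\rangle$) has no extremal point \emph{in} $\Omega$; it does not say $S'$ is a point. If $N(K,x)$ is two-dimensional and your chosen $v$ lies in its relative interior, then $S'\cap\Omega$ is a segment running across $\Omega$ with both endpoints on $\partial\Omega$, and Caffarelli gives no contradiction. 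The paper fixes this by choosing the base direction $e$ on an \emph{extremal ray} of $N(K,x)$ lying outside $N(K,o)$ (such a ray exists precisely because $N(K,x)\not\subset N(K,o)$ and $N(K,x)$ is the convex hull of its extremal rays). With that choice the origin becomes an extremal point of $S'$ inside $\Omega$, and Theorem~\ref{Caffarelli-smooth}~(i) does yield a contradiction. Once smoothness of every $x\in X$ is secured this way, strict convexity of the support function on $\{h_K>0\}$ follows, and only then does Theorem~\ref{Caffarelli-smooth}~(ii) give that each $F(K,u)$ is a point; your write-up collapses this two-step logic into one sentence.

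A smaller point: Theorem~\ref{Caffarelli-smooth} is stated for $\det\nabla^2 v$ on an open convex set in $\R^n$, not for $\det(\nabla^2 h+hI)$ on $S^{n-1}$. You should pass through Lemma~\ref{MongeAmpereRn-lemma} (restriction to a tangent hyperplane) before invoking it; this is routine but should be said.
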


Concerning strict convexity, Assertion~(iii) here  is optimal  because  Example~\ref{non-strictly-convex} shows that if 
$2-n<p<1$, then  it is possible that $o$ belongs to the relative interior of an $(n-1)$-dimensional face of 
a solution $K$ of \eqref{Kdensityfunction} where $f$ is a positive continuous function.
Therefore, the only question left open is the $C^1$ smoothness of the boundary of the solution if $2-n<p<1$.

We note that if $p<1$ and $K$ is a solution of \eqref{MongeAmpere_Sn} with $f$ positive and $o\in\partial K$, then 
\begin{equation}
\label{NKosmall}
{\rm dim}\,N(K,o)\leq n-1.
\end{equation}
Therefore, Theorem~\ref{th_regularity}  (ii) yields that $\partial K$ is $C^1$ for the solution $K$ if $n=2$. In general, we have the following partial results.

\begin{theorem}
\label{th_regularity-new}
If $K\in{\mathcal K}_{0}^n$ is a solution of   \eqref{Kdensityfunction} for $n\geq 2$ and $p<1$, and $f$ is bounded from above and below by positive constants, then the following assertions hold:
\begin{enumerate}[(i)] 
\item\label{th_regularity_e_} If $n=2$, $n=3$ or $n>3$ and $p<4-n$, then $\partial K$ is $C^1$.
\item\label{th_regularity_f_} If $\mathcal{H}^{n-1}(X_0)=0$ for the $X_0$ in Theorem~\ref{th_regularity} (i), then $\partial K$ is $C^1$.
\end{enumerate} 
\end{theorem}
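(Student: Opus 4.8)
The plan is to reduce both claims to Theorem~\ref{th_regularity}~(\ref{th_regularity_b_}): it suffices to show that $o$ is a smooth point of $\partial K$ whenever $o\in\partial K$. If $o\in\inte K$ there is nothing to do, since then $N(K,o)=\{o\}$, so $X_0=\emptyset$, so $X=\partial K$, and Theorem~\ref{th_regularity}~(\ref{th_regularity_a_}) already gives smoothness at every boundary point. So I would assume $o\in\partial K$ and argue by contradiction, supposing that $o$ is not smooth; write $N=N(K,o)$, $\Omega=N\cap S^{n-1}$, $k=\dim N$, and let $F$ be the face of $K$ containing $o$ in its relative interior, so that $F=F(K,\xi)$ for every $\xi\in\relint\Omega$, and put $m=\dim F$. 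Running the argument behind \eqref{NKosmall} --- $0=\int_\Omega h_K^{1-p}\,dS_K=S_{K,p}(\Omega)=\int_\Omega f\,d\mathcal H^{n-1}\ge\tau_1\mathcal H^{n-1}(\Omega)$, since $h_K\equiv0$ on $\Omega$ and $S_K$ is finite --- gives $\mathcal H^{n-1}(\Omega)=0$, hence $2\le k\le n-1$, $m\le n-k$, and $n\ge3$. Since the Gauss map $\nu_K$ is defined $\mathcal H^{n-1}$-a.e. and $N(K,x)=\{t\nu_K(x):t\ge0\}$ for a.e.\ $x\in\partial K$, the set $\nu_K^{-1}(\Omega)$ agrees with $X_0$ up to an $\mathcal H^{n-1}$-null set, so that $\mathcal H^{n-1}(X_0)=S_K(\Omega)$; thus the hypothesis of~(\ref{th_regularity_f_}) says precisely that $S_K$ carries no mass on $\Omega$.

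Next I would localise \eqref{MongeAmpere_Sn} near a point of $\relint\Omega$. After a rotation, $e_n\in\relint\Omega$, so $h_K(e_n)=0$, $K\subset\{x_n\le0\}$ and $F=K\cap\{x_n=0\}$. In the chart $g(y)=h_K(y,1)$, $y$ near $0\in\R^{n-1}$, the function $g$ is convex and nonnegative, $g(0)=0$, $\partial g(0)$ is a copy of the face $F$ (so $\dim\partial g(0)=m$ with $0$ in its relative interior, since $e_n\in\relint\Omega$), and $g$ solves, in the Alexandrov sense near $0$,
\[
\det D^2g=\psi\, g^{\,p-1},
\]
with $\psi$ bounded between positive constants (using $\tau_1\le f\le\tau_2$); moreover the zero set $Z=\{g=0\}$ is convex of dimension $k-1\ge1$ with $0\in\relint Z$. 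From $0\in\relint\partial g(0)$ one gets $g(y)\ge c\,|P_Vy|$ for $V=\lin\partial g(0)$, so $Z\subset V^\perp$, and writing $y=(v,w,z)$ with $v\in V$ (dimension $m$), $z$ spanning $Z$ (dimension $k-1$), and $w$ in the remaining directions of $V^\perp$ (dimension $n-k-m$), the function $g$ is comparable near $Z$ to the sum of a term linear in $v$ and a term growing like a power $|w|^{\beta}$ transverse to $Z$ inside $V^\perp$.

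The contradiction should come from the identity $|\partial g(B_r)|=\int_{B_r}\psi\,g^{\,p-1}$. The left-hand side is finite, because $g$ is locally Lipschitz; hence $g^{\,p-1}$ is integrable near $Z$, which already forces $\beta<\tfrac{n-k-m}{1-p}$ when $n-k-m>0$. On the other hand $|\partial g(B_r)|$ cannot be small: the transverse Hessian of $g$ cannot degenerate along $Z$, for otherwise $\det D^2g$ would vanish on a set where $\psi\,g^{\,p-1}$ is positive --- indeed unbounded, near $Z$. Quantifying both sides through the parabolic rescaling $v\mapsto\lambda^{\beta}v$, $w\mapsto\lambda w$, $z\mapsto\mu z$, doing the bookkeeping in the three groups of variables, using Theorem~\ref{th_regularity}~(\ref{th_regularity_a_}) to exclude interior extreme points of the flat pieces of $g$, and a Pogorelov-type interior bound on $\{g>0\}$ to control $\beta$, one should find the identity incompatible with the integrability of $g^{\,p-1}$ near $Z$ for every admissible pair $(k,m)$ with $k\ge2$ once $p<4-n$; this would prove~(\ref{th_regularity_e_}). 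For~(\ref{th_regularity_f_}), the extra information $S_K(\Omega)=0$ means $\partial K$ has no flat piece with outer normals in $\Omega$, equivalently $g$ has no ``corner'' along $Z$, so $g$ is $C^1$ there with $\nabla g\equiv0$ on $Z$ and hence vanishes to order strictly greater than $1$ transverse to $Z$; this rigidity should close the same estimate with no restriction on $p$.

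The hard part will be the quantitative control of $g$ near its zero set. The classical interior Monge--Amp\`ere estimates (Pogorelov, Caffarelli) require the right-hand side to lie between positive constants, which fails on any neighbourhood of $Z$, where $g^{\,p-1}$ is unbounded; so one must either localise to $\{g>0\}$ and quantify the decay of $g$ toward $Z$ --- equivalently, rule out that $\partial K$ is flatter than $C^{1,1}$ in the directions transverse to $\Omega$ but off the face $F$ --- or replace the pointwise bounds by measure estimates for the Monge--Amp\`ere mass of $g$ near $Z$ and for the shape of its sections, in the spirit of \cite{Caf90a}. Getting the exponent bookkeeping to land exactly on $p=4-n$, and separately disposing of the borderline configurations with $m$ as large as $n-k$ (where $\partial K$ is ``prismatic'' over $F$ near $o$ and the transverse directions disappear), is where the real work lies.
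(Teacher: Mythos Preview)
Your reduction to showing that $o$ is a smooth boundary point, and your localisation to the chart $g(y)=h_K(y,1)$ with zero set $Z$ of dimension $k-1\ge1$, are correct and match the paper's setup (Lemma~\ref{MongeAmpereRn-lemma}). But from there both parts have genuine gaps.

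For part~(\ref{th_regularity_e_}), you identify the right tension --- the Monge--Amp\`ere mass of a neighbourhood of $Z$ is controlled while $\int g^{p-1}$ over that neighbourhood wants to be large --- but the ``parabolic rescaling and bookkeeping'' is not carried out, and your splitting $y=(v,w,z)$ with transverse growth $|w|^\beta$ presupposes structure on $g$ near $Z$ that you have not established and that need not hold. The paper bypasses all of this via Proposition~\ref{vanishingonsegment}, a direct two-sided estimate requiring no fine information about $g$. On a slab $\Omega_t$ of width $t$ around $Z$, an elementary support-plane argument (the component of any subgradient in the $Z$-directions is $O(t)$ because $g\equiv0$ on $Z$) gives $\mathcal H^{n-1}(N_g(\Omega_t\setminus Z))\le c\,t^{k-1}$; on the other hand, convexity alone gives $g(x)\le C\,\mathrm{dist}(x,Z)$, so $\int_{\Omega_t}g^{p-1}\ge d\,t^{(n-1)+p-(k-1)-1}$. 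Comparing exponents as $t\to0^+$ forces $p\ge 2(k-1)-(n-1)+1=2k-n$; with $k\ge2$ this is exactly $p\ge4-n$. No Pogorelov estimate, no $\beta$, and no case analysis on $m$ is needed.

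For part~(\ref{th_regularity_f_}), your argument contains an actual error. The hypothesis $\mathcal H^{n-1}(X_0)=0$ translates, as you say, to $\mu_g(Z)=0$; but this does \emph{not} imply that $g$ is $C^1$ on $Z$ with $\nabla g\equiv0$ there. For a convex function on $\R^{n-1}$ with $n-1\ge2$, vanishing Monge--Amp\`ere measure at a point is compatible with any subgradient of $\mathcal H^{n-1}$-measure zero --- e.g.\ $g(y_1,y_2)=|y_1|$ has $\mu_g\equiv0$ but is nowhere differentiable on $\{y_1=0\}$. So your ``rigidity'' step does not follow, and nothing in your outline replaces it. The paper proves (\ref{th_regularity_f_}) through Proposition~\ref{ustrictconvex}: assuming $\mu_g(Z)=0$ and $\dim Z\ge1$, for each small $\varepsilon>0$ one cuts a section $\Omega_\varepsilon=\{g<\ell_\varepsilon\}$ by the affine function $\ell_\varepsilon$ through $Z+\varepsilon e$ of minimal cap-volume, shows (via a centroid argument and John's theorem) that $\Omega_\varepsilon$ is comparable to an origin-centred ellipsoid, checks that the two-sided bound $\tau_1 g^{p-1}\le\det\nabla^2 g\le\tau_2 g^{p-1}$ together with $\mu_g(Z)=0$ makes the Monge--Amp\`ere mass doubling on $\Omega_\varepsilon$, and then obtains contradictory upper and lower bounds for $\varepsilon$ from the Aleksandrov-type comparison estimates of Lemma~\ref{MongeAmperepointinside}. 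This is the substantive new ingredient for (\ref{th_regularity_f_}).
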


Our results  differ in some cases from the ones in Chou and Wang \cite{CW}, possibly because \cite{CW} considers the equation 
\begin{equation}\label{MongeAmpere_CW}
\det(\nabla^2h+h I)=f h^{p-1}\quad \text{ on $S^{n-1}$}
\end{equation}
instead of \eqref{MongeAmpere_Sn}. In the context of non-negative convex functions, being a solution of this last equation is a priori more restrictive than being a solution of \eqref{MongeAmpere_Sn}, even if obviously the  two notions coincide when $h$ is positive (see Section~\ref{secPreliminaries} for more on this point). 
Chou and Wang \cite{CW} proves,  under our same assumptions on $f$, the strict convexity of  the solution $h$ of \eqref{MongeAmpere_CW} on hyperplanes avoiding the origin, and uses this to prove that $\partial K$ is $C^1$
for the convex body $K$.
We note that if $K\in{\mathcal K}_{0}^n$ is a solution of \eqref{MongeAmpere_CW} for $p<1$ and $f$ is bounded from below and above by positive constants, then
combining Theorem~\ref{th_regularity-new}~(ii) with the simple observation \eqref{sol_alex_c} in Section~\ref{secPreliminaries}
shows that $\partial K$ is $C^1$, as it was verified by Chou and Wang~\cite{CW}. 
In our opinion \eqref{MongeAmpere_Sn} is the right equation to consider and using it we obtain weaker results. 

To give an example of how the two equations differ, the support function $h$ of the body $K$ in Example~\ref{non-strictly-convex} (where $o$ belongs to the relative interior of an $(n-1)$-dimensional face)  is a solution of \eqref{MongeAmpere_Sn} but not a solution of \eqref{MongeAmpere_CW}.

According to Chou and Wang~\cite{CW} (see also Lemma~\ref{MongeAmpereRn-lemma} below), the Monge-Amp\`ere equation (\ref{MongeAmpere_Sn}) can be transferred to a 
 Monge-Amp\`ere equation
\begin{equation}\label{MongeAmpereRn0}
v^{1-p}\det( D^2 v )=g
\end{equation}
for a convex function $v$ on $\R^{n-1}$ where $g$ is a given non-negative function and $D^2$ stands for the Hessian in $\R^{n-1}$.

The proofs of Claims~\eqref{th_regularity_a_} and~\eqref{th_regularity_b_} in Theorem~\ref{th_regularity} use as an essential tool a result proved by Caffarelli in \cite{Caf90a} regarding smoothness and strict convexity of convex solutions of  certain Monge-Amp\`ere equation of type \eqref{MongeAmpereRn0} (see Theorem~\ref{Caffarelli-smooth}). Proving that $\pa K$ is $C^1$ is equivalent to prove that $h_K$ is strictly convex, and \cite{Caf90a} is the key to prove this property in $\{y\in S^{n-1} : h_K(y)>0\}$. 

The proof of Claim~\eqref{th_regularity_e_} in Theorem~\ref{th_regularity-new} is based on the following result for the singular inequality $v^{1-p}\det D^2 v\geq g$.

\begin{prop}\label{vanishingonsegment} 
Let $\Omega\subset\R^n$ be an open convex set, and let $v$ be a non-negative convex function 
in $\Omega$ with $S=\{x\in\Omega:\,v(x)=0\}$. If for $p<1$ and $\tau>0$, $v$ is the solution of 
\begin{equation}\label{MongeAmpereOmega}
v^{1-p}\det D^2 v \geq \tau \quad \text{ in $\Omega\setminus S$}
\end{equation}
in the sense of measure, and $S$ is $r$-dimensional, for $r\geq 1$, then $p\geq-n+1+2r$.
\end{prop}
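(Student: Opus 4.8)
The plan is to localize near a relative interior point of the zero set $S$ and, for small $t>0$, to compare two estimates for the Monge--Amp\`ere measure $\mu_v=\det\nabla^2v$ of a slab of transverse width $t$ around $S$: an upper bound coming from the fact that the subgradients of $v$ are nearly tangent to $S$, and a lower bound coming from the singular right hand side $\tau v^{p-1}$. The two will be compatible as $t\to0^+$ only when $p\ge-n+1+2r$. Recall that $v$ solving \eqref{MongeAmpereOmega} in the sense of measure means precisely that $\mu_v\ge\tau\,v^{p-1}$ as measures on $\Omega\setminus S$. We may assume $1\le r\le n-1$.

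Since $S$ is a convex set of dimension $r$, after an isometry of $\R^n$ we may assume that the origin lies in $\relint S$, that the affine hull of $S$ is $\R^r\times\{o\}$, and that there is $\rho>0$ with $B^r_{2\rho}\times\{o\}\subseteq S$ and $\overline{B^r_{3\rho}}\times\overline{B^{n-r}_{3\rho}}\subseteq\Omega$; here we write $x=(x',x'')$ with $x'\in\R^r$, $x''\in\R^{n-r}$, $B^m_s$ is the ball of radius $s$ in $\R^m$, and $\omega_m=|B^m_1|$. Let $L$ be a Lipschitz constant for $v$ on $\overline{B^r_{3\rho}}\times\overline{B^{n-r}_{3\rho}}$. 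Since $v$ vanishes on $S$, we get $0\le v(x',x'')\le L|x''|$ whenever $|x'|\le2\rho$ and $|x''|\le3\rho$; also $|S|=0$ because $r\le n-1$.

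Fix $t\in(0,\rho)$ and set $R_t=B^r_\rho\times B^{n-r}_t$. For the upper bound, let $\xi=(\xi',\xi'')\in\partial v(x)$ with $x=(x',x'')\in R_t$. Testing the subgradient inequality $v(y)\ge v(x)+\xi\cdot(y-x)$ at $y=(x'\pm\rho e_i,x'')$ (where $e_1,\dots,e_r$ is the standard basis of $\R^r$), and using $v\ge0$ together with $v(x'\pm\rho e_i,x'')\le L|x''|\le Lt$, yields $|\xi'_i|\le Lt/\rho$ for each $i$, while $|\xi''|\le L$; hence $\partial v(R_t)$ lies in a box of measure at most $(2Lt/\rho)^r\omega_{n-r}L^{n-r}$, so
\[
\mu_v(R_t)=|\partial v(R_t)|\ \le\ C_1\,t^{\,r}.
\]
For the lower bound, on $R_t\setminus S$ we have $0<v\le L|x''|\le Lt$, so $v^{p-1}\ge(Lt)^{p-1}$ since $p-1<0$, and therefore, using $|S|=0$,
\[
\mu_v(R_t)\ \ge\ \tau\!\int_{R_t\setminus S}\! v^{p-1}\ \ge\ \tau\,(Lt)^{p-1}\,|R_t|\ =\ C_2\,t^{\,p+n-r-1},\qquad C_2=\tau L^{p-1}\omega_r\omega_{n-r}\rho^r>0.
\]
Combining the two displays gives $C_2\,t^{\,p+n-r-1}\le C_1\,t^{\,r}$, i.e. $t^{\,p+n-2r-1}\le C_1/C_2$ for all $t\in(0,\rho)$; letting $t\to0^+$ forces $p+n-2r-1\ge0$, which is the assertion.

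The only genuinely delicate step is the upper estimate $\mu_v(R_t)\le C_1t^r$. It rests on the elementary but crucial observation that, because $v$ is non-negative and vanishes on the $r$-dimensional set $S$, every subgradient of $v$ at a point of a slab of transverse width $t$ lies within $O(t)$ of the $(n-r)$-dimensional subspace orthogonal to $\aff S$, so that the Aleksandrov image $\partial v(R_t)$ is ``flat'' of size $O(t)$ in the $r$ directions along $S$; the rest is bookkeeping with the Lipschitz constant and with the identity $\mu_v(E)=|\partial v(E)|$.
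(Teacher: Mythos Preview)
Your proof is correct and follows essentially the same approach as the paper's: both localize near a relative interior point of $S$, consider a slab of transverse width $t$ around $S$ (your $R_t$, the paper's $\Omega_t$), bound its Monge--Amp\`ere measure from above by $O(t^r)$ via subgradient estimates, from below by $c\,t^{p+n-r-1}$ via the inequality $v\lesssim|x''|$ and the differential inequality, and let $t\to0^+$. The only cosmetic difference is that the paper derives the subgradient bounds by working with the capped epigraph $M$ and specific test points $q_1,q_2\in M$, while you obtain the same bounds by testing the subgradient inequality directly and invoking a local Lipschitz constant; these are equivalent packagings of the same idea.
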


We mention that in Caffarelli \cite{Caf93} a corresponding result for $p=1$ is established.

The underlying idea behind the proof of this result is the following: On the one hand, the graph of $v$ near $S$ is close to being ruled. Hence, the total variation of the derivative is ``small". On the other hand, the total variation of the derivative is ``large" because of the Monge-Amp\`ere inequality (\ref{MongeAmpereOmega}).

The inequality $p\geq-n+1+2r$ in this result is close to being optimal, at least when $r=1$. Indeed, Example~\ref{example_vanishingonsegment} shows that, for any $p>-n+3$, there exists   a non-negative convex solution of \eqref{MongeAmpereOmega} in $\Omega$ which vanishes on the intersection of $\Omega$ with a line.
For the version $p=1$ of Proposition~\ref{vanishingonsegment}, Caffarelli \cite{Caf93}
proves that ${\rm dim}\,S<n/2$ and that this inequality is optimal.

Proposition~\ref{vanishingonsegment}  yields actually somewhat more than Claim~\eqref{th_regularity_e_} in Theorem~\ref{th_regularity-new}; namely, if $r\geq 2$ is an integer, $p<\min\{1,2r-n\}$ and 
$K\in{\mathcal K}_{0}^n$ is a solution of   \eqref{Kdensityfunction} with $o\in\partial K$, then 
${\rm dim}\,N(K,o)<r$. As a consequence, we have the following technical statements about $K$, where we also use Theorem~\ref{th_regularity-new}~\eqref{th_regularity_f_} for Claim~(ii).

\begin{corollary}\label{cor_regularity_new}
If $p<1$ and $K\in{\mathcal K}_{0}^n$, $n\geq 4$, is a solution of   \eqref{Kdensityfunction} with $o\in\partial K$, then 
\begin{enumerate}[(i)] 
\item\label{th_regularity_g_} ${\rm dim}\,N(K,o)<\frac{n+1}2$;
\item\label{th_regularity_h_} if in addition $n=4,5$ and $\partial K$ is not $C^1$, then 
${\rm dim}\,N(K,o) =2$ and ${\rm dim}\,F(K,u)=n-1$ for some $u\in N(K,o)$.
\end{enumerate} 
\end{corollary}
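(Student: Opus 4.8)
The plan is to deduce both claims from Proposition~\ref{vanishingonsegment} via the transfer to $\R^{n-1}$ described in Lemma~\ref{MongeAmpereRn-lemma} (equation \eqref{MongeAmpereRn0}), together with the observation recorded after Proposition~\ref{vanishingonsegment}. First I would recall that transfer: if $o\in\partial K$ and $r=\dim N(K,o)$, then $N(K,o)$ is an $r$-dimensional cone, $1\le r\le n-1$ by \eqref{NKosmall}, and after choosing coordinates so that $N(K,o)$ contains a fixed direction, the support function $h_K$ gives rise via the Legendre-type transform of Lemma~\ref{MongeAmpereRn-lemma} to a non-negative convex function $v$ on an open convex $\Omega\subset\R^{n-1}$ solving $v^{1-p}\det\nabla^2 v=g$ on $\Omega\setminus S$, where $g$ is bounded below by a positive constant (since $f>\tau_1$) and the zero set $S=\{v=0\}$ has dimension exactly $r-1$; the key point is that the $r$-dimensional face structure of $N(K,o)$ corresponds to the $(r-1)$-dimensional flat piece of the graph of $v$ over $S$. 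Applying Proposition~\ref{vanishingonsegment} with $r-1$ in place of $r$ (valid once $r-1\ge 1$, i.e.\ $r\ge 2$) forces $p\ge -(n-1)+1+2(r-1)=2r-n$. Contrapositively, if $p<2r-n$ then $r-1<1$, i.e.\ $\dim N(K,o)\le 1$; more usefully, for an integer $r\ge 2$, $p<\min\{1,2r-n\}$ already rules out $\dim N(K,o)=r$, hence $\dim N(K,o)<r$.

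For Claim~\eqref{th_regularity_g_}: set $r=\dim N(K,o)$ and suppose for contradiction $r\ge\frac{n+1}{2}$, equivalently $2r\ge n+1$, i.e.\ $2r-n\ge 1>p$ (using $p<1$). Since $r\ge 2$ when $n\ge 4$ and $r\ge \frac{n+1}{2}\ge\frac52$, the previous paragraph with this value of $r$ gives $p\ge 2r-n\ge 1$, contradicting $p<1$. Hence $\dim N(K,o)<\frac{n+1}{2}$.

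For Claim~\eqref{th_regularity_h_}: assume $n\in\{4,5\}$ and $K$ is not smooth. Non-smoothness means $o$ is a non-smooth point of $\partial K$ — indeed, every non-smooth point of $\partial K$ lies in $X_0$ by Theorem~\ref{th_regularity}~\eqref{th_regularity_a_}, so $X_0\ne\emptyset$; then Theorem~\ref{th_regularity-new}~\eqref{th_regularity_f_} forces $\mathcal H^{n-1}(X_0)>0$, and since $X_0$ consists of points whose normal cone lies in $N(K,o)$, a positive-measure $X_0$ forces $N(K,o)$ to have dimension $\ge 2$ (if $\dim N(K,o)\le 1$ then $X_0$ is contained in a lower-dimensional set). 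By Claim~\eqref{th_regularity_g_}, $\dim N(K,o)<\frac{n+1}{2}\le 3$, so $\dim N(K,o)=2$. It remains to show $\dim F(K,u)=n-1$ for some $u\in N(K,o)$. Here I would argue that a positive-measure $X_0$ together with $\dim N(K,o)=2$ forces some face $F(K,u)$, $u\in N(K,o)$, to be $(n-1)$-dimensional: the set $X_0$ is the union of the faces $F(K,u)$ over $u$ ranging in the $2$-dimensional cone $N(K,o)$ (more precisely over the at most countably many $u$ with $\dim F(K,u)\ge n-2$, since faces of smaller dimension contribute an $\mathcal H^{n-1}$-null set, and the union over a $1$-parameter family of $(n-2)$-dimensional faces is still $\mathcal H^{n-1}$-null by a Fubini argument); hence $\mathcal H^{n-1}(X_0)>0$ is possible only if some $F(K,u)$ has full dimension $n-1$.

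The main obstacle is the last step of Claim~\eqref{th_regularity_h_}: showing that $\mathcal H^{n-1}(X_0)>0$ together with $\dim N(K,o)=2$ genuinely forces a single $(n-1)$-dimensional face, rather than the measure being spread over a continuum of $(n-2)$-dimensional faces. This needs a careful decomposition of $X_0$ by the dimension of the associated face and a Fubini/coarea estimate over the $2$-dimensional parameter cone $N(K,o)$; one must check that the ``$(n-2)$-dimensional face'' stratum, fibred over at most a $1$-dimensional set of normals $u$, is $\mathcal H^{n-1}$-null, while the ``$(n-1)$-dimensional'' stratum can occur for only finitely many normals. Everything else is a bookkeeping application of Proposition~\ref{vanishingonsegment} through the standard dimension-reduction $n\mapsto n-1$, $r\mapsto r-1$.
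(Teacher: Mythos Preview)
Your argument for Claim~\eqref{th_regularity_g_} is correct and matches the paper's: both reduce to Proposition~\ref{vanishingonsegment} via Lemma~\ref{MongeAmpereRn-lemma}, with the bookkeeping $n\mapsto n-1$, $r\mapsto r-1$.

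For Claim~\eqref{th_regularity_h_} your overall strategy is also the paper's --- deduce $\dim N(K,o)=2$ from Claim~\eqref{th_regularity_g_}, then use the contrapositive of Theorem~\ref{th_regularity-new}~\eqref{th_regularity_f_} to get $\mathcal H^{n-1}(X_0)>0$, and conclude that some face must be full-dimensional. However, your final step has a genuine gap. You claim that ``the union over a $1$-parameter family of $(n-2)$-dimensional faces is still $\mathcal H^{n-1}$-null by a Fubini argument''; this is false in general (already for $n=3$ a one-parameter family of lines can sweep out a surface of positive area), and the stratification you sketch --- at most countably many $u$ with $\dim F(K,u)\ge n-2$ --- is not correct either, since uncountably many normals can have $(n-2)$-dimensional faces.

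The paper replaces this coarea heuristic with a direct structural observation. Since $\dim N(K,o)=2$, the set $N(K,o)\cap S^{n-1}$ is a closed arc with endpoints $e_1,e_2$. For any $u$ in the \emph{relative interior} of this arc one has $u=\alpha e_1+\beta e_2$ with $\alpha,\beta>0$, and then \eqref{notstrictlyconvex} forces $F(K,u)\subset F(K,e_1)\cap F(K,e_2)$, which lies in the intersection of the two distinct supporting hyperplanes $\{x:\langle x,e_i\rangle=h_K(e_i)\}$, an affine subspace of dimension $n-2$. Thus the entire union $\bigcup\{F(K,u):u\neq e_1,e_2\}$ sits inside a single $(n-2)$-flat and is automatically $\mathcal H^{n-1}$-null --- no Fubini/coarea estimate is needed. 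Hence $\mathcal H^{n-1}(X_0)>0$ forces $\dim F(K,e_1)=n-1$ or $\dim F(K,e_2)=n-1$. This is the missing idea in your proposal.
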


In Section~\ref{secPreliminaries} we review the notation used in this paper.
Section~\ref{sec-Monge-Ampere} contains results and examples regarding Monge-Amp\`ere equations in $\R^n$, namely Proposition~\ref{vanishingonsegment}, Example~\ref{example_vanishingonsegment} and Proposition~\ref{ustrictconvex}. This last result is the key to prove Theorem~\ref{th_regularity-new}~\eqref{th_regularity_h_}.
In Section~\ref{sec-th_regularity} we show, for the sake of completeness, how to prove Theorem~\ref{th_regularity} using ideas due to Caffarelli \cite{Caf90a,Caf90b}
and Chou and Wang~\cite{CW}.
Theorem~\ref{th_regularity-new} and Corollary~\ref{cor_regularity_new}   are proved in Section~\ref{sec-th_regularity-new}.

\section{Notation and preliminaries}
\label{secPreliminaries}

As usual, $S^{n-1}$ denotes the unit sphere and  $o$ the
origin in the Euclidean $n$-space $\R^n$. The symbol $B^n$ denotes the unit ball in $\R^n$ centred at $o$ and $\omega_n$ denotes its volume. If $x,y\in\R^n$, then $\left<x,y\right>$ is the scalar product of $x$ and $y$, while $\|x\|$ is the euclidean norm of $x$. By $[x,y]$ we denote the segment with endpoint $x$ and $y$.

We write $\mathcal{H}^k$ for $k$-dimensional Hausdorff measure in $\R^n$.

We denote by $\partial E$,  $\inte E$,  $\cl E$, and $1_E$ the {\it boundary}, 
{\it interior}, \emph{closure}, and {\em characteristic function} of a set $E$ in $\R^n$, respectively.  The symbols $\aff E$ and $\lin E$ denote respectively the \emph{affine hull} and the \emph{linear hull} of $E$. The \emph{dimension} $\dim E$ is the dimension of $\aff E$. 
With the symbol $E\mathbin|L$ we denote the orthogonal projection of $E$ on the linear space  $L$.

Given a function $v$ defined on a subset of $\R^n$, $D v$ and $D^2 v$ denote its gradient and its Hessian, respectively. 

Our next goal is to recall a standard notion of generalised solution of Monge-Amp\`ere equations, usually referred to as
{\em solution in the sense of measure}. Our general reference
for notions and facts about Monge-Amp\`ere equations is the survey by Trudinger and Wang~\cite{TrWa}.
Let $v$ be a convex function defined in an open convex set $\Omega$; the subgradient $\pa v(x)$ of $v$ at $x\in\Omega$ is defined as
\[
\pa v(x) =\{z\in\R^n : v(y)\geq v(x)+\langle z,y-x\rangle \text{ for each $y\in\Omega$}\},
\]
which is a non-empty compact convex set. Note that $v$ is differentiable at $x\in\Omega$ if and only if $\partial v(x)$ consists
 of exactly one vector, which is the gradient of $v$ at $x$.
If $\omega\subset\Omega$ is a Borel set, then we denote by $N_v(\omega)$ the image of $\omega$ through the gradient map of $v$, i.e.
\[
N_v(\omega)=\bigcup_{x\in\omega}\pa v (x). 
\]
Note that as $\omega$ is a Borel set, then $N_v(\omega)$ is measurable. Hence, we may define the Monge-Amp\`ere measure associated to 
$v$ as follows 
\begin{equation}\label{Monge-Ampere-measure}
\mu_v(\omega)=\mathcal{H}^{n}\Big(N_v\big(\omega\big)\Big).
\end{equation}
For $p<1$ and non-negative $g$ on $\R^n$, we say that the non-negative convex function $v$ satisfies the Monge-Amp\`ere equation
$$
v^{1-p}\det( D^2 v )=g
$$
in the sense of measure (or in the Alexandrov sense) if 
$$
v^{1-p}\,d\mu_v=g\,d\mathcal{H}^n.
$$
Equivalently
$$
\int_\omega v^{1-p}(x) d\mu_v(x)=\int_\omega g(x)dx
$$
for every Borel subset $\omega$ of $\Omega$. 

A {\em convex body} in $\R^n$ is a compact convex set with nonempty interior. The treatises 
Gardner \cite{GAR1}, Gruber \cite{PM} and Schneider \cite{SCH} are excellent general references for convex geometry.
The function
$$
h_K(u)=\max\{\left<u,y\right>: y\in K\},
$$
for $u\in\R^n$, is the {\it support function} of $K$. When it is clear the convex body to which we refer  we will drop the subscript $K$ from $h_K$ and write simply $h$.  Any convex body $K$ is uniquely determined by its support function. A set $C\subset\R^n$ is a \emph{convex cone} if  $\alpha_1u_1+\alpha_2u_2\in C$ for any $u_1,u_2\in C$ and $\alpha_1,\alpha_2\geq 0$.

If $S$ is a convex  set in $\R^n$, then $z\in S$ is an extremal point if $z=\alpha x_1+(1-\alpha)x_2$ for
$x_1,x_2\in S$ and $\alpha\in(0,1)$ imply $x_1=x_2=z$. We note that if $S$ is compact and convex, then $S$ is the convex hull of its extremal points.  If $C$ is a convex cone and $u\in C\backslash\{o\}$, we say that 
$\sigma=\{\lambda u:\,\lambda\geq 0\}$ is an extremal ray if $\alpha_1 x_1+\alpha_2x_2\in\sigma$ for
$x_1,x_2\in C$ and $\alpha_1,\alpha_2>0$ imply $x_1,x_2\in\sigma$. Now if $C\neq\{o\}$ is a closed convex cone such that the origin is an extremal point of $C$, then $C$ is the convex hull of its extremal rays.

The \emph{normal cone} of a convex body $K$ at $z\in K$ is defined as 
\[
N(K,z)=\{u\in\R^n : \langle u, y\rangle\leq \langle u, z\rangle\text{ for all $y\in K$}\}
\]
where $N(K,z)=\{o\}$ if $z\in {\rm int} K$ and ${\rm dim}\,N(K,z)\geq 1$ if $z\in\pa K$.
This definition can be written also as 
\begin{equation}
\label{duality_body_support1}
 N(K,z)=\{u\in\R^n : h_K(u)=\langle z,u\rangle\}.
\end{equation}
In particular, $N(K,z)$ is a closed convex cone such that the origin is an extremal point, and
\begin{equation}
\label{notstrictlyconvex}
h_K(\alpha_1u_1+\alpha_2u_2)=\alpha_1h_K(u_1)+\alpha_2h_K(u_2)
\text{\ for $u_1,u_2\in N(K,z)$ and $\alpha_1,\alpha_2>0$.}
\end{equation}
A convex body $K$ is $C^1$-smooth at $p\in\pa K$ if $N(K,p)$ is a ray, and $\partial K$ is $C^1$ if each $p\in\pa K$ is a $C^1$-smooth point. Therefore, $\pa K$ is $C^1$ if and only if the restriction of $h_K$ to any hyperplane not containing $o$ is strictly convex,
by \eqref{notstrictlyconvex}. 

We say that a convex body $K$ is \emph{strictly convex} if $\pa K$ contains no segment. 
The \emph{face} of $K$ with outer normal 
$u\in\R^n$ is defined as 
\[
 F(K,u)=\{z\in K : h_K(u)=\langle z,u\rangle\},
\]
which lies in $\pa K$ if $u\neq o$.
Schneider~\cite[Thm.~1.7.4]{SCH} proves that
\begin{equation}\label{duality_body_support2}
\pa h_K(u)=F(K,u).
\end{equation}
Therefore, $K$ is strictly convex  if and only if $h_K$ is $C^1$ on $\R^n\backslash\{o\}$.

A crucial notion for this paper is the one of {\em surface area measure} $S_K$ of a convex body $K$, which is a Borel measure
on $S^{n-1}$, defined as follows. For any Borel set $\omega\subset S^{n-1}$:
$$
S_K(\omega)=\mathcal{H}^{n-1}\big(\cup_{u\in\omega} F(K,u)\big)
=\mathcal{H}^{n-1}\big(\cup_{u\in\omega} \pa h_K(u)\big),
$$
Hence, $S_K$ is the analogue of the Monge-Amp\`ere measure for the restriction of $h_K$ to $S^{n-1}$.

Given a convex body $K$ containing $o$ and $p<1$,  let $S_{K,p}$ denote  the {$L_p$ area measure} of $K$; namely,
\begin{equation}\label{p_area_measure}
 d S_{K,p}=h_K^{1-p}d S_K.
\end{equation}

Let $f$ be a positive and measurable function on $S^{n-1}$; we say that $h_K$ is
a solution of \eqref{MongeAmpere_Sn} in the sense of measure if
\begin{equation}\label{new 0}
\int_{\omega} h_K(y)^{1-p}dS_K(y)=
\int_\omega f(y)d\mathcal{H}^{n-1}(y)
\end{equation}
for every Borel subset $\omega$ of $S^{n-1}$. 

In what follows we will always assume that $f$ is bounded between two positive constants. Our first remark is that the 
previous definition is equivalent to the following conditions~(a) and~(b):
\begin{enumerate}[(a)]
\item $\dim N(K,o)<n$; or equivalently,
\begin{equation}\label{sol_alex_a}
\mathcal{H}^{n-1}\big(\{y\in S^{n-1} : h_K(y)=0\}\big)=\mathcal{H}^{n-1}\big(N(K,o)\cap S^{n-1}\big)=0,
\end{equation}
\item for each Borel set $\omega\subset \{y\in S^{n-1}:\,h_K(y)>0\}$, we have
\begin{equation}\label{sol_alex_b first}
\int_\omega h_K^{1-p}(y)\,dS_K(y)
=\int_{\omega} f(y)\, d\mathcal{H}^{n-1}(y).
\end{equation}
\end{enumerate}
Moreover, condition (b) is in turn equivalent to 
\begin{enumerate}[(a)]
\item[(b')] for each Borel set $\omega\subset \{y\in S^{n-1}:\,h_K(y)>0\}$, we have
\begin{equation}\label{sol_alex_b}
S_K(\omega)
=\int_{\omega} f(y)h_K(y)^{p-1}\, d\mathcal{H}^{n-1}(y).
\end{equation}
\end{enumerate}
To prove that (b) and (b') are equivalent is a simple exercise (in which one has to take into account the fact that $h_K$ is continuous).
Indeed, both claims are in turn equivalent to the following fact: the measure $S_K$ is absolutely continuous with respect to 
$\mathcal{H}^{n-1}$ on $S^{n-1}\setminus\{y\in S^{n-1}\colon h_K(y)=0\}$, and the Radon-Nikodym derivative of $S_K$ with respect
to $\mathcal{H}^{n-1}$ is $f h_K^{p-1}$.

Let us prove the equivalence between \eqref{new 0} and (a)-(b). To this end, it will be useful the following observation: the set
$$
\{x\in\R^n\colon h_K(x)=0\}
$$
is a closed convex cone. Indeed, it is the set where the non-negative, convex and 1-homogeneous function $h_k$ attains its minimum. 
For convenience, we set $\omega_0=\{y\in S^{n-1}\colon h_K(y)=0\}$.
Assume that \eqref{new 0} holds; then (b) follows immediately. If, by contradiction, (a) is false, then 
$\omega_0$ has non-empty interior so that
$$
0=\int_{\omega_0} h_K(y)^{1-p} \, dS_K(y)=
\int_{\omega_0} f(y)\, d\mathcal{H}^{n-1}(y)>0,
$$
i.e. a contradiction (in the last inequality we have used the fact that $f$ is bounded from below by a positive constant). 
Vice versa, assume that (a) and (b) hold. Given a Borel subset $\omega$ of $S^{n-1}$ we may write it as the disjoint union of
$\omega'=\omega\cap\omega_0$ and $\omega''=\omega\setminus\omega'$. By (a), $\mathcal{H}^{n-1}(\omega')=0$, moreover
$h_K=0$ on $\omega'$; hence,
\begin{eqnarray*}
\int_\omega h_K(y)^{1-p}\, dS_K(y)&=&\int_{\omega''} h_K(y)^{1-p}\, dS_K(y)\\
&=&\int_{\omega''} f(y)\, d\mathcal{H}^{n-1}(y)\\
&=&\int_{\omega} f(y)\, d\mathcal{H}^{n-1}(y),
\end{eqnarray*}
i.e. \eqref{new 0}.

\medskip

Our next step is to compare the solutions considered by Chou and Wang~\cite{CW} with the ones introduced here.
In particular, we will show that if $h_K$ is a solution  of \eqref{MongeAmpere_CW}, then it verifies conditions (a) and (b) as well
(and consequently \eqref{new 0}). 
Note that being a solution of \eqref{MongeAmpere_CW} in the sense of measures means that
\begin{equation}\label{new}
S_K(\omega)
=\int_{\omega} f(y)h_K(y)^{p-1}\, d\mathcal{H}^{n-1}(y).
\end{equation}
has to hold for every Borel subset of $S^{n-1}$. In particular \eqref{sol_alex_b} follows (and then (b)). Moreover, as $S_K$ is finite, $h_K\ge0$ and $f$ is bounded between two positive constants, the previous relation implies that 
$$
\int_{S^{n-1}}h_K(y)^{p-1}\, d\mathcal{H}^{n-1}(y)<+\infty.
$$
As $p-1<0$, this yields that the set $\omega_0$ where $h_K$ vanishes on $S^{n-1}$ has zero $(n-1)$-dimensional measure. On the other hand 
this is the intersection of $S^{n-1}$ with a convex cone. Hence we get condition (a). 

In addition, if we now apply \eqref{new} to $\omega_0$, we get that when $h_K$ is a solution of~\eqref{MongeAmpere_CW} then
\begin{equation}\label{sol_alex_c}
S_K\big(N(K,o)\cap S^{n-1}\big)=0.
\end{equation}

Note that \eqref{sol_alex_c} implies that $\mathcal{H}^{n-1}(X_0)=0$, in the notation of Theorem~\ref{th_regularity-new}, because $X_0\subset \cup \{F(K,u) : u\in N(K,o)\cap S^{n-1}\}$ and~\eqref{sol_alex_c} means, by definition,
\[
\mathcal{H}^{n-1}\big(\cup_{u\in N(K,o)\cap S^{n-1}} F(K,u)\big)=0.
\]
 Hence,
applying Theorem~\ref{th_regularity-new}~\eqref{th_regularity_f_} we deduce that
if $K\in{\mathcal K}_{0}^n$ is a solution of \eqref{MongeAmpere_CW} for $p<1$ and $f$ is bounded from below and above by positive constants, 
then $\partial K$ is $C^1$, as it was verified by Chou and Wang~\cite{CW}. 

\section{Some results on Monge-Amp\`ere equations in Euclidean space} 
\label{sec-Monge-Ampere}

Lemma~\ref{MongeAmpereRn-lemma} is the tool to transfer the Monge-Amp\`ere equation (\ref{MongeAmpere_Sn})
on $S^{n-1}$ to a Euclidean Monge-Amp\`ere equation on $\R^{n-1}$.
For $e\in S^{n-1}$, we consider the restriction of a solution $h$ of \eqref{MongeAmpere_Sn} to the  hyperplane tangent to $S^{n-1}$ at  $e$. 

\begin{lemma}
\label{MongeAmpereRn-lemma}
If $e\in S^{n-1}$, $h$ is a convex positively $1$-homogeneous non-negative function on $\R^n$ that is a solution of \eqref{MongeAmpere_Sn} for $p<1$ and positive $f$,  and $v(y)=h(y+e)$ holds for $v:\,e^\bot\to \R$, then 
$v$ satisfies
\begin{equation}\label{MongeAmpereRn}
v^{1-p}\det( D^2 v )=g \quad \text{ on $e^\bot$}
\end{equation}
where, for $y\in  e^\bot$, we have
\[
g(y)=\left(1+\|y\|^2\right)^{-\frac{n+p}2} f\left(\frac{e+y}{\sqrt{1+\|y\|^2}}\right).
\]
\end{lemma}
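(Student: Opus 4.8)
The plan is to pass from the sphere to the tangent hyperplane $e^\bot$ via the standard gnomonic-type parametrization $y\mapsto (e+y)/\sqrt{1+\|y\|^2}$, and to compare the surface area measure of $K$ restricted to an appropriate piece of $S^{n-1}$ with the Monge-Amp\`ere measure $\mu_v$ of $v$. The geometric content is that $v(y)=h(y+e)$ is the restriction of the support function to the affine hyperplane $e+e^\bot$, so the graph of $v$ over $e^\bot$, suitably interpreted, encodes the same boundary data as $h$ does over the ``cap'' $\{u\in S^{n-1}:\langle u,e\rangle>0\}$; one only has to keep track of the Jacobian factors produced by the radial projection.

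\emph{Step 1: identify $v$ as a support-function restriction and record convexity.} Since $h$ is positively $1$-homogeneous and convex, its restriction $v$ to the hyperplane $e+e^\bot$ is convex on $e^\bot$; moreover $v\geq 0$ because $h\geq 0$. This is immediate. It is also worth noting here that $v(y)=h(y+e)$ determines $h$ on the open half-space $\{\langle x,e\rangle>0\}$ by homogeneity, via $h(x)=\langle x,e\rangle\,v\!\left(\tfrac{x}{\langle x,e\rangle}-e\right)$.

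\emph{Step 2: relate $\mu_v$ to $S_K$.} Using \eqref{duality_body_support2}, $\pa v(y)$ can be computed from $\pa h$ at the point $y+e$: writing a subgradient of $h$ at $y+e$ and projecting onto $e^\bot$ gives the subgradient of $v$ at $y$, while the $e$-component is recovered from $1$-homogeneity (Euler's relation $\langle x,\nabla h(x)\rangle=h(x)$). Concretely, for $u=z+\langle u,e\rangle e\in\pa h(y+e)$ with $z\in e^\bot$, one gets $z\in\pa v(y)$, and $\langle u,e\rangle = v(y)-\langle z,y\rangle$. Thus the gradient image $N_v(\omega)$ for a Borel set $\omega\subset e^\bot$ is the projection onto $e^\bot$ of the union of those faces $F(K,u)$ with $u$ ranging over a corresponding set $\widetilde\omega\subset S^{n-1}$, namely those directions whose radial projection to $e+e^\bot$ lands in $\omega$. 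Comparing $\mathcal H^n$ on $e^\bot$ (the definition \eqref{Monge-Ampere-measure} of $\mu_v$) with $\mathcal H^{n-1}$ on $S^{n-1}$ (the definition of $S_K$) introduces exactly the Jacobian of the parametrization $y\mapsto (e+y)/\sqrt{1+\|y\|^2}$ of the cap, which is $(1+\|y\|^2)^{-n/2}$. Hence $d\mu_v(y) = (1+\|y\|^2)^{-n/2}\, n f\!\left(\tfrac{e+y}{\sqrt{1+\|y\|^2}}\right) h_K\!\left(\tfrac{e+y}{\sqrt{1+\|y\|^2}}\right)^{p-1} d\mathcal H^{n-1}(y)$ on the region where $h>0$, using \eqref{sol_alex_b}; here one also uses \eqref{sol_alex_a} to discard the zero set of $h$ as $\mathcal H^{n-1}$-null, so the measure-theoretic identity holds on all of $e^\bot$.

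\emph{Step 3: convert $h_K$ back to $v$ and collect exponents.} By $1$-homogeneity, $h_K\!\left(\tfrac{e+y}{\sqrt{1+\|y\|^2}}\right) = (1+\|y\|^2)^{-1/2}\, h(e+y) = (1+\|y\|^2)^{-1/2}\, v(y)$. Substituting, the factor $h_K(\cdots)^{p-1}$ contributes $(1+\|y\|^2)^{-(p-1)/2} v(y)^{p-1}$, and combining with the Jacobian $(1+\|y\|^2)^{-n/2}$ gives the total power $-(n+p-1)/2$ on $(1+\|y\|^2)$ multiplying $v(y)^{p-1}\,d\mathcal H^n$. Rearranging, $v(y)^{1-p}\,d\mu_v(y) = (1+\|y\|^2)^{-(n+p-1)/2}\, n f\!\left(\tfrac{e+y}{\sqrt{1+\|y\|^2}}\right)\, d\mathcal H^n(y)$. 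Hmm — this yields the stated equation with $g(y) = (1+\|y\|^2)^{-(n+p-1)/2} f(\cdots)$ up to the exact exponent; I would recheck the Jacobian normalization (the parametrization of a graph over $e^\bot$ versus the inverse stereographic/gnomonic map), since the claimed exponent is $-(n+p)/2$, and track whether an extra factor of $(1+\|y\|^2)^{-1/2}$ enters from the surface-area element of the cap versus the flat measure on $e^\bot$. Modulo this bookkeeping, the identity is the assertion \eqref{MongeAmpereRn} in the Alexandrov sense.

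The main obstacle is \emph{Step 2}: making the change-of-variables rigorous at the level of measures rather than smooth functions. One cannot assume $h$ is $C^2$, so the argument must be phrased via the set-valued gradient maps $N_v$ and the faces $F(K,u)$, checking that the relevant unions of faces are measurable, that overlaps occur only on a null set (standard, since faces of positive dimension project injectively up to $\mathcal H^{n-1}$-null sets away from the singular locus), and that the Jacobian factor for the radial projection is correctly inserted into the $\mathcal H^{n-1}$-to-$\mathcal H^n$ comparison. Once the measure-level identity $v^{1-p}d\mu_v = g\, d\mathcal H^n$ is established on the cap, extending it across the zero set of $h$ is harmless by \eqref{sol_alex_a}, and the positive $1$-homogeneity guarantees that no information is lost in restricting to a single tangent hyperplane.
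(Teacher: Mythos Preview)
Your approach is exactly the paper's: relate $\partial v(y)$ to $F(K,u)|e^\bot$ for $u=\pi(y)=(e+y)/\sqrt{1+\|y\|^2}$, pull $S_K$ back through $\pi$, and rewrite $h_K(\pi(y))$ in terms of $v(y)$. The only issue is the exponent discrepancy you flag in Step~3, and it has a concrete source you have not identified.

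The missing factor is the cosine coming from the projection $F(K,u)\mapsto F(K,u)|e^\bot$. The face $F(K,u)$ lies in a hyperplane with normal $u$, so orthogonal projection onto $e^\bot$ scales its $(n-1)$-measure by $\langle u,e\rangle$. Hence
\[
\mu_v(\omega)=\mathcal H^{n-1}\Big(\bigcup_{u\in\pi(\omega)}F(K,u)\big|e^\bot\Big)=\int_{\pi(\omega)}\langle u,e\rangle\,dS_K(u),
\]
not $S_K(\pi(\omega))$ directly. Since $\langle\pi(y),e\rangle=(1+\|y\|^2)^{-1/2}$, this supplies precisely the extra $(1+\|y\|^2)^{-1/2}$ you were looking for, and the exponent becomes $-(n+p)/2$ as stated. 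Your Step~2 formula for $d\mu_v$ omits this factor because it treats the Jacobian of $\pi$ (which handles the change of variables on $S^{n-1}$) but forgets that $N_v(\omega)$ is the \emph{projected} inverse Gauss image, not the inverse Gauss image itself.

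Two minor cleanups: the ambient measure on $e^\bot$ is $\mathcal H^{n-1}$, not $\mathcal H^n$ (your Step~3 switches); and the factor $n$ you carry from \eqref{sol_alex_b} should be reconciled with the paper's normalization---the paper's own proof uses $dS_K=h_K^{p-1}f\,d\mathcal H^{n-1}$ without the $n$, consistent with the stated $g$.
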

\begin{proof}
Let $h=h_K$ for $K\in\mathcal{K}_0^n$, and let
$$
\widetilde{S}=\{u\in S^{n-1}:\,h_K(u)=0\},
$$
which is a possibly empty spherically convex compact set whose spherical dimension is at most $n-2$,
by (\ref{sol_alex_a}). According to (\ref{sol_alex_b}), the Monge-Amp\`ere equation for $h_K$ can be written in the form
\begin{equation}
\label{hKnozero}
 dS_K=h_K^{p-1}f\,d \mathcal{H}^{n-1}\mbox{ \ \ on $S^{n-1}\backslash \widetilde{S}$}.
\end{equation}

We consider $\pi:e^\bot\to S^{n-1}$ defined by
$$
\pi(x)=(1+\|x\|^2)^{\frac{-1}2}(x+e),
$$
which is induced by the radial projection from the tangent hyperplane $e+e^\bot$ to $S^{n-1}$. 
Since $\langle \pi(x),e\rangle=(1+\|x\|^2)^{\frac{-1}2}$, the Jacobian of $\pi$ is
\begin{equation}
\label{Dpi}
\det D\pi(x)=(1+\|x\|^2)^{\frac{-n}2}.
\end{equation}

For $x\in e^\bot$, (\ref{duality_body_support2}) and writing $h_K$ in terms of an orthonormal basis of $\R^n$ containing $e$,  
yield that $v$ satisfies
$$
\pa v(x)=\pa h_K(x+e)|e^\bot=F(K,x+e)|e^\bot=F(K,\pi(x))|e^\bot.
$$
Let $S=\pi^{-1}(\widetilde{S})$. 
For a Borel set $\omega\subset e^\bot\backslash S$, we have
\begin{eqnarray*}
\mathcal{H}^{n-1}(N_v(\omega))&=&
\mathcal{H}^{n-1}\left(\cup_{x\in\omega}\pa v(x)\right)\\
&=&\mathcal{H}^{n-1}\left(\cup_{u\in\pi(\omega)}\left(F(K,u)|e^\bot\right)\right)
=\int_{\pi(\omega)}\langle u,e\rangle\,dS_K(u)\\
&=&\int_{\pi(\omega)}\langle u,e\rangle h_K^{p-1}(u)f(u)\,d \mathcal{H}^{n-1}(u)\\
&=&\int_\omega(1+\|x\|^2)^{\frac{-n-p}2} f(\pi(x))v(x)^{p-1}\,d \mathcal{H}^{n-1}(x)
\end{eqnarray*}
where we used at the last step that
$$
v(x)=h_K(x+e)=(1+\|x\|^2)^{\frac{1}2}h_K(\pi(x)).
$$
In particular, $v$ satisfies the Monge-Amp\`ere type differential equation
$$
\det D^2v(x)=(1+\|x\|^2)^{\frac{-n-p}2} f(\pi(x))v(x)^{p-1}
\mbox{ \ \ on $e^\bot\backslash S$}.
$$
Since ${\rm dim}\,S\leq n-2$ by \eqref{NKosmall}, $v$ satisfies (\ref{MongeAmpereRn}) on $e^\bot$.
\end{proof}

Having Lemma~\ref{MongeAmpereRn-lemma} at hand showing the need to understand related Monge-Amp\`ere equations in Euclidean spaces, we prove Propositions~\ref{vanishingonsegment}
and \ref{ustrictconvex}, and quote Caffarelli's Theorem~\ref{Caffarelli-smooth}.

\begin{proof}[Proof of Proposition~\ref{vanishingonsegment}]
Up to changing coordinate system, we may assume, without loss of generality, that
$S\subset \{(x_1,x_2)\in\R^r\times\R^{n-r}: x_2=0\}$ and the origin is contained in the relative interior of $S$.
Therefore, up to restricting $\Omega$, we may also assume that 
$v$ is continuous on ${\rm cl}\,\Omega$, that
$\Omega=\{(x_1,x_2)\in\R^r\times\R^{n-r}: \|x_1\|< s_1, \|x_2\|< s_2\}$ for some constants $s_1,s_2>0$ and that $S=\{(x_1,x_2)\in\Omega : x_2=0\}$.

Let  $\alpha=\max_{{\rm cl}\,\Omega} v$ and let us consider the convex body
\[
M=\{(x_1,x_2,y)\in \R^r\times\R^{n-r}\times\R : \|x_1\|\leq s_1, \|x_2\|\leq s_2, v(x_1,x_2)\leq y\leq \alpha\}.
\]
For $t\in(0,s_2/2]$, let
\[
\Omega_t=\{(x_1,x_2)\in \R^r\times\R^{n-r} : \|x_1\|\leq s_1/2, \|x_2\|\leq t\}.
\]

We estimate $\mathcal{H}^{n}\big(N_v(\Omega_t\setminus S)\big)$. Let $(x_1,x_2)\in \Omega_t\setminus S$ and let $(z_1,z_2)\in \R^r\times\R^{n-r}$ belong to $\pa v (x_1,x_2)$. We prove that
\begin{equation}\label{bound_subgradient}
 \|z_2\|\leq \frac{2\alpha}{s_2}\quad\text{and}\quad \|z_1\|\leq\frac{4\alpha}{s_1 s_2} t.
\end{equation}
If $z_2=0$ the first inequality in \eqref{bound_subgradient} holds true. Assume $z_2\neq0$.
The vector $(z_1,z_2,-1)$ is an exterior normal to $M$ at $p=(x_1,x_2,v(x_1,x_2))$. Since 
\[
q_1=\left(x_1,x_2+\frac{s_2 z_2}{2\|z_2\|},\alpha\right)\in M
\] 
(because  $\big\|x_2+s_2 z_2/(2\|z_2\|)\big\|\leq \|x_2\|+s_2/2\leq s_2$) then $\langle q_1-p, (z_1,z_2,-1)\rangle\leq0$. This implies
\[
\|z_2\|\leq\frac{2}{s_2}(\alpha-v(x_1,x_2))
\]
and the first inequality in \eqref{bound_subgradient}. Again, if $z_1=0$ then the second inequality \eqref{bound_subgradient} holds true. Assume $z_1\neq0$. We have
\[
q_2=\left(x_1+\frac{s_1 z_1}{2 \|z_1\|}, 0, v(x_1,x_2)\right)\in M,
\]
because $\big\|x_1+s_1 z_1/(2\|z_1\|)\big\|\leq s_1$,  $(x_1+s_1 z_1/(2\|z_1\|),0)\in S$ and therefore $v(x_1,x_2)\geq 0=v(x_1+s_1 z_1/(2\|z_1\|),0)$.
The inequality $\langle q_2-p, (z_1,z_2,-1)\rangle\leq0$ implies the second inequality \eqref{bound_subgradient}.

The inequalities in \eqref{bound_subgradient} imply
\begin{equation}\label{th_regularity_c_estimate_Omegat}
 \mathcal{H}^{n}\big(N_v(\Omega_t\setminus S)\big)\leq c\ t^r,
\end{equation}
for a suitable constant $c$ independent of $t$.

Now we estimate $\int_{\Omega_t\backslash S} v(x)^{p-1}\ dx$. 
The inclusion of the convex hull of $S\times\{0\}$ and 
$\{\|x_1\|\leq s_1, \|x_2\|\leq s_2, y=\alpha\}$ in $M$ implies that $v(x_1,x_2)\leq\frac{\alpha}{s_2}\, \|x_2\|$ for each $(x_1,x_2)\in\Omega_t$ by the convexity of $v$. Using this estimate it is straightforward to compute that 
\begin{equation}\label{th_regularity_c_estimate_integral}
 \int_{\Omega_t\backslash S} v(x)^{p-1}\ dx\geq d\ t^{n+p-r-1},
\end{equation}
for a suitable constant $d$ independent on $t$. 
The inequalities \eqref{th_regularity_c_estimate_Omegat} and \eqref{th_regularity_c_estimate_integral} and the differential inequality satisfied by $v$  imply, as $t\to0^+$, 
\[
 c t^r\geq \mathcal{H}^{n}\big(N_v(\Omega_t\setminus S)\big)\geq \int_{\Omega_t\backslash S} \tau v(x)^{p-1}\ dx\geq \tau d\ t^{n+p-r-1}.
\]
This inequality implies $p\geq -n+1+2r$.
\end{proof}


\begin{example}\label{example_vanishingonsegment}Let us show that for any $p>-n+3$ there exists   a non-negative convex solution of \eqref{MongeAmpereOmega} in $\Omega=\{(x_1,x_2)\in\R\times\R^{n-1}: x_1\in[-1,1], \|x_2\|\leq 1\}$ which vanish on the $1$-dimensional space $S=\{(x_1,x_2)\in\R\times\R^{n-1} : x_2=0\}$.  

To prove this let
\[
v(x_1,x_2) = \|x_2\|+f(\|x_2\|)g(x_1)
\]
where $f(r)=r^\alpha$, with $\alpha=(p+n-1)/2$, and $g(x_1)=(1+\beta x_1^2)$, with $\beta>0$ sufficiently small. Note that $\alpha>1$ exactly when $p>-n+3$.

The function $v$ is invariant with respect to rotations around the line containing $S$. To compute $\det D^2 v$ 
at an arbitrary point, it suffices to compute it at $(x_1,0,\dots,0,r)$, $r\ge0$. We get
\begin{align*}
 &v_{x_1x_1}=f(r) g''(x_1), & &\\
 &v_{x_1x_i}=0 & &\text{when $1<i<n$,}\\
 &v_{x_1x_n}=f'(r) g'(x_1), & &\\
 &v_{x_ix_i}=\frac1{r}+\frac{f'(r)}{r} g(x_1) & &\text{when $1<i<n$,}\\
 &v_{x_i x_j}=0 & &\text{when $i\neq j$, $(i,j)\neq(1,n)$, $(i,j)\neq(n,1)$,}\\
 &v_{x_nx_n}=f''(r)g(x_1). & &
\end{align*}
The function $v$ is convex if $\beta$ is sufficiently small. Indeed, the eigenvalues of $D^2 v$ are $\frac1{r}+\frac{f'(r)}{r} g(x_1)$, with multiplicity $n-2$, and those of the matrix
\[
\left(
\begin{array}{ll}
f g'' & f'g'\\
f'g' & f''g
 \end{array}
 \right).
\]
The determinant of the latter matrix is 
\[
 2\alpha\beta r^{2(\alpha-1)}\Big(\alpha-1-(1+\alpha)\beta x_1^2\Big),
\]
which is positive if $\beta>0$ is sufficiently small. Thus, all eigenvalues of $D^2 v$ are positive.

We get
\[
\det D^2 v= \Big( f'' g f g'' -( f' g')^2 \Big)  \Big(\frac1{r}+\frac{f'}{r} g\Big)^{n-2}
\]
which has the same order as $r^{2\alpha -n}$ as $r\to 0^+$. Clearly $v$ has order $r$, and 
$v^{1-p}\det D^2 v$ has order $r^{2\alpha-n+1-p}$, which is uniformly bounded from above and below 
for our choice of $\alpha$.
\end{example}

The next statement is a slight modification of Lemmas~3.2 and 3.3 from Trudinger and Wang~\cite{TrWa}. Its proof closely follows that in \cite{TrWa} and is given here for completeness.

\begin{lemma}\label{MongeAmperepointinside}
Let $v$ be a convex function defined on the closure of an open bounded convex set $\Omega\subset \R^n$ satisfying the 
Monge-Amp\`ere equation
$$
\det D^2 v=\nu
$$
for a finite non-negative measure $\nu$ on $\Omega$, let $v\equiv 0$ on $\partial \Omega$ and let $tE\subset\Omega\subset E$ for $t>0$ and an origin centred ellipsoid $E$.
\begin{enumerate}[(i)]
\item\label{MApi_i} If $z\in\Omega$ satisfies $(z+s\, E)\cap\partial\Omega\neq \emptyset$ for $s>0$, then
$$
|v(z)|\leq s^{1/n}c_0\mathcal{H}^n(\Omega)^{1/n}\nu(\Omega)^{1/n}
$$
for some $c_0>0$ depending on $n,t$.
\item\label{MApi_ii} If  $\nu(t\Omega)\geq b\,\nu(\Omega)$ for $b>0$, then
\begin{equation}\label{formula_lemmaTW_2}
|v(0)|\geq c_1\mathcal{H}^n(\Omega)^{1/n}\nu(\Omega)^{1/n}
\end{equation}
for some $c_1>0$ depending on $n$, $t$ and $b$.
\item\label{MApi_iii} If  $(z+s\, E)\cap\partial\Omega\neq \emptyset$ and $\nu(t\Omega)\geq b\,\nu(\Omega)$ then
\begin{equation}\label{formula_lemmaTW_3}
\frac{|v(z)|}{|v(o)|}\leq \frac{c_1}{c_0}s^{1/n}.
\end{equation}
\end{enumerate}
\end{lemma}
When $E=B^n$ the number $s$ can be chosen as the distance of $z$ from $\pa\Om$. In the general case $s$ has the same meaning in the metric induced by the norm whose unitary ball is $E$.
\begin{proof}
 Let $A$ be a linear transformation such that $B^n=A^{-1}E$, let $\tilde{v}(x)=v(Ax)|\det A|^{-2/n}$, $\widetilde{\Om}=A^{-1}\Om$ and let $\tilde{\nu}$ be the measure 
 defined for each Borel set $\om\subset\widetilde{\Om}$ as $\tilde{\nu}(\om)=\nu(A\om)/|\det A|$. It is known that $\tilde{v}$ solves 
 \begin{equation}\label{equation'}
  \det D^2 \tilde{v}=\tilde{\nu}\quad\text{ in $\widetilde{\Om}$.}
 \end{equation}
Moreover, $t B^n\subset\widetilde{\Om}\subset B^n$. Since $\mathcal{H}^n(\Om)=|\det A|\mathcal{H}^n(\widetilde{\Om})$, we have
\begin{equation}\label{det_A}
 \frac{\mathcal{H}^n(\Om)}{\om_n}\leq|\det A|\leq \frac{\mathcal{H}^n(\Om)}{\om_n t^n}.
\end{equation}

Let us prove Claim~\eqref{MApi_i}. Let $\tilde{z}=A^{-1}z$. Then $(\tilde{z}+s B^n)\cap\pa\widetilde{\Om}\neq\emptyset$ and if $d$ denotes the distance of $\tilde{z}$ from $\pa\widetilde{\Om}$ we have $d\leq s$. By choosing proper coordinates we may assume that $\tilde{z}=(0,\dots,0,d)$, and that $\widetilde{\Om}\subset\{(x_1,\dots,x_n)\in\R^n : x_n>0\}$. Then 
\[
\widetilde{\Om}\subset\widehat{\Om}=\{(x_1,\dots,x_n)\in\R^n : \|(x_1,\dots,x_{n-1})\|<2, 0< x_n<4\}.
\]
 Let $u$ and $w$ be convex functions such that their graphs are convex cones with vertex at $(\tilde{z},\tilde{v}(\tilde{z}))$ and bases $\pa \widetilde{\Om}$ and $\pa\widehat{\Om}$, respectively. Then
 \begin{equation}\label{cones_inclusion1}
  N_{\tilde{v}}(\widetilde{\Om})\supset N_{u}(\widetilde{\Om})=\pa u(\tilde{z})\supset\pa w(\tilde{z}).
 \end{equation}
Since $w$ is a convex cone over the cylinder $\widehat{\Om}$, one can easily compute that  $\mathcal{H}^n(\pa w(\tilde{z}))\geq c_2 |\tilde{v}(\tilde{z})|^n/d$, for a suitable constant $c_2>0$. This inequality, \eqref{equation'} and \eqref{cones_inclusion1}  imply
\[
 |\tilde{v}(\tilde{z})|\leq\left(\frac{d}{c_2}\right)^{1/n}\mathcal{H}^n(N_{\tilde{v}}(\widetilde{\Om}))^{1/n}=
 \left(\frac{d}{c_2}\right)^{1/n}\tilde{\nu}(\widetilde{\Om})^{1/n}.
\]
Expressing this inequality in terms of $v$, $\Om$ and $\nu$ and using $d\leq s$ and \eqref{det_A} concludes the proof of Claim~\eqref{MApi_i}.

Let us prove Claim~\eqref{MApi_ii}. 
There exists an unique solution $w$ of $\det D^2 w=\widehat{\nu}$ in $\widetilde{\Om}$, $w=0$ in $\pa\widetilde{\Om}$, where $\widehat{\nu}=\tilde{\nu}$ in $t\widetilde{\Om}$ and $\widehat{\nu}=0$ elsewhere (see Theorem 2.1 in \cite{TrWa}).  The comparison principle for Monge-Amp\`ere equations (see Lemma 2.4 in \cite{TrWa}) implies $w\geq \tilde{v}$ in $\widetilde{\Om}$. 

Let $z\in t\widetilde{\Om}$. The distance  $d$ of $z$ from $\pa\widetilde{\Om}$ is larger than or equal to  $(1-t)t$ (here we have used the inclusion $tB^n\subset\widetilde{\Om}$). If $y\in\pa w (z)$ and $l(x)=\left<x,y\right>+w(z)$ then $l(x)\leq w(x)$ for each $x\in\widetilde{\Om}$, by definition of subgradient. In particular, we have $l(x)\leq0$ for each $x\in\pa\widetilde{\Om}$. This implies
\[
|y|\leq\frac{|w(z)|}{d}\leq \frac{\sup_{\widetilde{\Om}} |\tilde{v}|}{t(1-t)}.
\]
Therefore,
\[
 \mathcal{H}^n(N_w(t\widetilde{\Om}))\leq\om_n\left(\frac{\sup_{\widetilde{\Om}} |\tilde{v}|}{t(1-t)}\right)^n.
\]
This inequality, the equation satisfied by $w$ and the condition $\nu(t\Omega)\geq b\,\nu(\Omega)$ imply
\begin{equation}\label{ineq_intermed}\begin{aligned}
 \sup_{\widetilde{\Om}} |\tilde{v}|\geq&\frac{t(1-t)}{\om_n^{1/n}}\mathcal{H}^n(N_w(t\widetilde{\Om}))^{1/n}=
 \frac{t(1-t)}{\om_n^{1/n}}\tilde{\nu}(t\widetilde{\Om})^{1/n}\\
 \geq& \frac{b t(1-t)}{\om_n^{1/n}}\tilde{\nu}(\widetilde{\Om})^{1/n}.
\end{aligned}\end{equation}
We claim that
\begin{equation}\label{relation_sup_0}
 |\tilde{v}(o)|\geq\frac{t}{1+t}\sup_{\widetilde{\Om}}|\tilde{v}|.
\end{equation}
Indeed, let $z\in\widetilde{\Om}$ be such that $\tilde{v}(z)=\inf_{\widetilde{\Om}}\tilde{v}$. We may clearly assume $z\neq0$, since otherwise there is nothing to prove. By choosing proper coordinates we may assume $z=(z_1,0,\dots,0)$ for some $z_1>0$. 
Let $l$ be the linear function defined on the line through $o$ and $z$ and such that $l(o)=\tilde{v}(o)$ and $l(z)=\tilde{v}(z)$. 
It is $l(s,0,\dots0)=\tilde{v}(o)+s(\inf_{\widetilde{\Om}}\tilde{v}-\tilde{v}(o))/z_1$. Since $\tilde{v}$ is convex,
\[
 l(s,0,\dots0)\leq \tilde{v}(s,0,\dots0)
\]
for each $s\notin[0,z_1]$ such that $(s,0,\dots,0)\in\widetilde{\Om}$ . When $s=-t$ we obtain $l(-t,0,\dots,0)\leq \tilde{v}(-t,0,\dots,0)\leq0$. The inequality $l(-t,0,\dots,0)\leq0$ and the inclusion $\widetilde{\Om}\subset B^n$ imply \eqref{relation_sup_0}.

The proof of Claim~\eqref{MApi_ii} is concluded by combining \eqref{ineq_intermed} and \eqref{relation_sup_0} and expressing the obtained inequality in terms of $v$, $\Om$ and $\nu$.

Claim~\eqref{MApi_iii} is a consequence of the first two claims.
\end{proof}

The proof of Claim~\eqref{th_regularity_f_} in Theorem~\ref{th_regularity-new} is based on the following proposition, which is related to a step in the proof of Theorem~E~(a) in~\cite{CW}, however our proof is substantially different from that in \cite{CW}.

\begin{prop}
\label{ustrictconvex}
Let $v$ be a non-negative convex function defined on the closure of an open convex set $\Omega\subset \R^n$, 
$n\geq 2$,
such that $S=\{x\in \Omega:v(x)=0\}$ is non-empty and compact, and $v$ is locally strictly convex on $\Omega\backslash S$.
Let $\psi:(0,\infty)\to[0,\infty)$ be monotone decreasing and not identically zero; assume that $\tau_2>\tau_1>0$ and $v$ satisfy
\begin{equation}\label{utau1tau2}
\tau_1\psi(v)\leq \det D^2 v\leq \tau_2\psi(v)
\end{equation}
in the sense of measure on $\Omega\backslash S$. If ${\rm dim}\,S\leq n-1$ and $\mu_v(S)=0$ for the associated Monge-Amp\`ere measure $\mu_v$, then $S$ is a point.
\end{prop}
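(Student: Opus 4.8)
The plan is to argue by contradiction: suppose $S$ is not a single point. Since $S=\{v=0\}$ is the zero set of a convex function it is compact and convex, so $r:=\dim S$ satisfies $1\le r\le n-1$. After an affine change of coordinates --- which preserves $v$, $\psi$, convexity, local strict convexity and $\mu_v(S)=0$, and only rescales $\tau_1,\tau_2$ --- and after shrinking $\Omega$ to a bounded convex open set that compactly contains $S$ and on whose boundary $v$ is positive, we may assume that $\aff S=\R^r\times\{0\}$, that $0\in\relint S$, that $v$ is continuous on $\cl\Omega$, that $S\subseteq B^r(0,\rho)\times\{0\}$ and that $B^r(0,2\rho)\times B^{n-r}(0,2\rho)\subseteq\Omega$ for some $\rho>0$; write $x=(x_1,x_2)\in\R^r\times\R^{n-r}$ and $S=S_1\times\{0\}$.

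Two facts come first. \emph{(a)} The two--sided bound \eqref{utau1tau2} forces $\mu_v$ to have no singular part on $\Omega\setminus S$, and together with $\mu_v(S)=0$ this gives $\mu_v=(\det\nabla^2 v)\,\mathcal H^n$ on $\Omega$ with $\tau_1\psi(v)\le\det\nabla^2 v\le\tau_2\psi(v)$ a.e.; in particular $\int_\Omega\psi(v)\le\mu_v(\Omega)/\tau_1<\infty$, so $\psi(v)\in L^1(\Omega)$. \emph{(b)} For $x_0\in\relint S$ and $q=(q_1,q_2)\in\partial v(x_0)$, testing the subgradient inequality against points of $S$ near $x_0$ forces $q_1=0$; hence $\partial v(x_0)=\{0\}\times P$ for every $x_0\in\relint S$, where
\[
P:=\bigl\{q\in\R^{n-r}: v(y_1,y_2)\ge\langle q,y_2\rangle\ \text{for all }(y_1,y_2)\in\Omega\bigr\}
\]
is a fixed compact convex set, so $\{0\}\times P$ is a fixed set of dimension at most $n-r\le n-1$. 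Moreover, local strict convexity on $\Omega\setminus S$ implies that every line segment contained in the graph of $v$ lies over $S$, and that $\nabla v$ is injective on its set of differentiability points in $\Omega\setminus S$.

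For the quantitative part, for small $\epsilon>0$ set $\Omega_\epsilon=\{v<\epsilon\}$; this is a bounded convex open set with $S\subseteq\Omega_\epsilon\subseteq B^r(0,\rho)\times B^{n-r}(0,\rho)$, with smallest half--width $w_\epsilon\to0$, and with $\mathcal H^n(\Omega_\epsilon)\to0$. On one hand, applying Alexandrov's maximum principle to $v-\epsilon$ on $\Omega_\epsilon$ at a point $x_0\in S$ (where $|v(x_0)-\epsilon|=\epsilon$ and $\operatorname{dist}(x_0,\partial\Omega_\epsilon)\le w_\epsilon$) and using $\operatorname{diam}\Omega_\epsilon\le 2\sqrt2\,\rho$, one obtains a lower bound of the form $\mu_v(\Omega_\epsilon)\ge c\,\epsilon^{\,n}/w_\epsilon$ with $c>0$ independent of $\epsilon$. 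On the other hand, by fact (b) and upper semicontinuity of $\partial v$, over a fixed compact $K_1\subseteq\relint S_1$ the subgradients $\partial v(x_1,x_2)$ with $\|x_2\|$ small lie in an arbitrarily small neighbourhood of the lower-dimensional set $\{0\}\times P$, whence $\mu_v\bigl(K_1\times B^{n-r}(0,t)\bigr)\to0$ as $t\to0^+$. Since also $\psi(v)\in L^1(\Omega)$ gives $\mu_v(\Omega_\epsilon)\le\tau_2\int_{\Omega_\epsilon}\psi(v)\to0$, the only part of $\Omega_\epsilon$ that might still carry non--negligible Monge--Amp\`ere mass is a thin neighbourhood of $\partial_{rel}S$.

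The contradiction is then reached by showing that this neighbourhood too carries only $o(\epsilon^{\,n}/w_\epsilon)$ of Monge--Amp\`ere mass and by comparing rates. Here it is essential that $S$ is \emph{compact} and $v$ continuous and positive off $S$: this forces $\Omega_\epsilon\cap(\R^r\times\{0\})$ to shrink to $S_1$, hence confines the neighbourhood in question to an $\mathcal H^r$--small tube around $\partial_{rel}S_1$ of transverse width $\le w_\epsilon$; the hypothesis $\mu_v(S)=0$ --- equivalently, that $N_v(\partial_{rel}S)$ is $\mathcal H^n$--null --- together with upper semicontinuity of $\partial v$ near $\partial_{rel}S$ controls the $\R^r$--part of the gradients there, while the upper bound $\det\nabla^2 v\le\tau_2\psi(v)$, used through Lemma~\ref{MongeAmperepointinside}, controls their transverse part and prevents $\Omega_\epsilon$ from being too thin. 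Feeding these estimates into $c\,\epsilon^{\,n}/w_\epsilon\le\mu_v(\Omega_\epsilon)$ and letting $\epsilon\to0^+$ forces $r=0$, a contradiction; hence $S$ is a point. I expect the main obstacle to be precisely this last step: making the estimate near $\partial_{rel}S$ quantitative and organising the various ``$o(1)$'' bounds into a genuine contradiction rather than a vacuous inequality. This is where the compactness of $S$ and the hypothesis $\mu_v(S)=0$ really enter --- note that Proposition~\ref{vanishingonsegment}, which uses only the inequality on $\Omega\setminus S$, yields merely $p\ge-n+1+2r$ and is not by itself contradictory --- and it is also the point at which the argument must depart from the one in~\cite{CW}.
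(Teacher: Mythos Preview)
Your sketch sets up the contradiction plausibly but leaves exactly the step you flag as ``the main obstacle'' genuinely unfinished, and I do not see how to close it along the lines you indicate. From Alexandrov's maximum principle you get $\mu_v(\Omega_\varepsilon)\ge c\,\varepsilon^n/w_\varepsilon$, and from $\psi(v)\in L^1$ you get $\mu_v(\Omega_\varepsilon)\to 0$; but these are compatible whenever $\varepsilon^n/w_\varepsilon\to 0$, which you have no control over. To upgrade the upper bound to $o(\varepsilon^n/w_\varepsilon)$ you would need a quantitative estimate on $N_v$ near $\partial_{\rm rel}S$ \emph{at the same rate}, and the ingredients you list --- upper semicontinuity of $\partial v$, the fact that $N_v(\partial_{\rm rel}S)$ is $\mathcal H^n$-null, and a vague invocation of Lemma~\ref{MongeAmperepointinside} for the transverse part --- give only qualitative $o(1)$ information. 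A further warning sign is that you never use the monotonicity of $\psi$, which is a hypothesis of the proposition; this strongly suggests the argument as written is not yet using all the available structure.

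The paper's proof avoids this difficulty by a different choice of section. Instead of the horizontal sublevel set $\{v<\varepsilon\}$, it takes a \emph{tilted} section $\Omega_\varepsilon=\{v<l_\varepsilon\}$, where $l_\varepsilon$ is the affine function of height $\varepsilon$ on $L=\aff S$ that minimises the cut-off volume. This choice forces the centroid of $\Omega_\varepsilon$ to lie on $L$, and then a short John-ellipsoid argument produces an ellipsoid $E_\varepsilon$ \emph{centred at the origin} with $\tfrac{1}{8n^3}E_\varepsilon\subset\Omega_\varepsilon\subset E_\varepsilon$. With this normalisation both halves of Lemma~\ref{MongeAmperepointinside} apply directly: part~(ii), together with the doubling-type estimate $\nu(t\Omega_\varepsilon)\ge b\,\nu(\Omega_\varepsilon)$ (this is where the monotonicity of $\psi$ enters, via $v(tx)\le v(x)$), yields $\varepsilon\ge c_1\,\mathcal H^n(\Omega_\varepsilon)^{1/n}\nu(\Omega_\varepsilon)^{1/n}$; part~(i), applied at a point $z_\varepsilon\in S$ which, because $\dim S\ge 1$ and $L\cap\Omega_\varepsilon\to S$, can be chosen within $sE_\varepsilon$ of $\partial\Omega_\varepsilon$ for $s$ as small as we like, yields $\varepsilon\le \tfrac{c_1}{2}\,\mathcal H^n(\Omega_\varepsilon)^{1/n}\nu(\Omega_\varepsilon)^{1/n}$. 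The two inequalities contradict each other immediately, with no case analysis near $\partial_{\rm rel}S$ and no rate comparison. The tilting is the idea you are missing: it converts the problem from ``estimate mass near the relative boundary of $S$'' into ``find a point of $S$ near the boundary of the normalised section'', which is automatic once $\dim S\ge 1$.
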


Note that~\eqref{utau1tau2} means that for each Borel set $\omega\subset \Omega\setminus S$ we have
\[
 \tau_1\int_\omega\psi(v(x))\,dx\leq \mu_v(\omega)\leq \tau_2\int_\omega\psi(v(x))\,dx,
\]
where $\mu_v$ has been defined in~\eqref{Monge-Ampere-measure}.
\begin{proof}
We  assume, arguing by contradiction that $S$ is not a point.
Choose coordinates so that $o$ is the centre of mass of $S$.  Let $L=\lin \, S$. By assumption
\begin{equation}\label{definition_m}
1\leq\dim L\leq n-1.
\end{equation}
Let $e=(o,1)\in\R^n\times\R$. We may assume that $\Omega$ is bounded, after possibly substituting it with a bounded open neighbourhood of $S$.
We start by illustrating the idea of the proof.

\textit{Sketch of the proof.}
For any small $\varepsilon>0$, we construct an affine function $l_\varepsilon$ such that 
$l_\varepsilon(x)=\varepsilon$ for $x\in L$, and
 the convex set $\Omega_\varepsilon=\{v<l_\varepsilon\}$ is well-balanced; namely,
there exists an ellipsoid $E_\varepsilon$ centred at the origin such that
$(1/(8 n^3))E_\varepsilon\subset \Omega_\varepsilon\subset E_\varepsilon$ (see (\ref{Omegaepsellipsoid})). This is the longest part of the argument, and the main idea to construct $l_\varepsilon$ is that the graph of $l_\varepsilon$ cuts off the smallest volume cap from the graph of $v$ among the hyperplanes in $\R^{n+1}$ containing $L+\varepsilon e$.
Subsequently, we apply Lemma~\ref{MongeAmperepointinside} to $\Omega_\ve$ and to the function $v-l_\varepsilon$ in the standard way to reach a contradiction. We show that one can choose $z\in S$ so that the corresponding parameter $s$, as defined in Lemma~\ref{MongeAmperepointinside}, tends to $0$ as $\veps$ tends to $0$. (Equivalently, $S$ contains points whose distance from $\pa \Om_\veps$, the one induced by the norm whose unit ball is $E_\veps$, tends to $0$ as $\veps$ tends to $0$.) This contradicts  \eqref{formula_lemmaTW_3}, since $|v(z)-l_\veps(z)|/|v(o)-l_\veps(o)|=\veps/\veps=1$.

\medskip

We divide the proof into four steps.

\medskip

\textit{Step 1. Definition of $l_\veps$ and of $\Om_\veps$.}

Let $\varepsilon_0=\min_{\pa\Om} v>0$ and let us consider the $(n+1)$-dimensional convex body 
\[
M=\{(x,y)\in\R^{n}\times\R:v(x)\leq y\leq \varepsilon_0\}.
\]
For $\veps\in(0,\veps_0)$ define $H_\varepsilon$ to be a hyperplane in $\R^{n+1}$
\begin{enumerate}[(i)]
\item containing $L+\varepsilon e=\{(x,\varepsilon e)\in\R^n\times\R\colon x\in L\}$ and
\item cutting off the minimal volume from $M$ (on the side containing the origin) under condition (i).
\end{enumerate}

Let $r>0$. We claim that there exists $\veps_1=\veps_1(r)$ so that  $H_\veps$ is the graph of an affine function $l_\veps$ for each $\varepsilon\in(0,\varepsilon_1)$, and, setting
$$
\Omega_\varepsilon=\{x\in\R^n:v(x)<l_\varepsilon(x)\},
$$
we have
\begin{equation}\label{inclusions_Om_eps}
\cl\Om_\veps\subset\Om,\quad S\subset \Omega_\varepsilon\quad\text{and}\quad \Omega_\varepsilon\cap L\subset (1+r)S.
\end{equation}
Let $F=\{(x,y)\in M : y=\varepsilon_0\}$ be the upper face of $M$ and let $\mathcal {H}$ be the collection of hyperplanes in $\R^{n+1}$  which intersect both $F$ and $\{(x,y)\in M : y\leq\varepsilon_0/2\}$. Since $\Omega$ is bounded and $v$ is locally strictly convex on $\Omega\setminus S$, every hyperplane in $\mathcal {H}$ is not a supporting hyperplane to $M$. 
Therefore, by compactness, there exists a constant $\varrho_0>0$ such that for every $H\in\mathcal {H}$ both components of  $M\backslash H$ are of volume at least $\varrho_0$. 
We choose $\varepsilon_1\in(0,\varepsilon_0/2)$ such that the volume of the cap $\{(x,y)\in M:\,y\leq \varepsilon_1\}$ is less than $\varrho_0$. This choice implies that the minimum value of the problem which defines $H_\veps$ is less than $\varrho_0$. Therefore, a minimizer $H_\veps$ does not belong to $\mathcal{H}$. Since $H_\veps\cap\{(x,y)\in M : y\leq\varepsilon_0/2\}\neq0$, we have $H_\veps\cap F=\emptyset$. In particular, $H_\veps$ is the graph of a affine function defined on $\R^n$ and $\cl \Om_\veps\subset\Om$.

The inclusion $S\subset \Om_\veps$ holds because $v(x)=0$ and $l_\veps(x)=\veps$ for any $x\in S$. 

The origin $o$, being the centre of mass of $S$, belongs to the relative interior of $S$. Since $\dim S>0$, the relative boundary  of $(1+r) S$  does not intersect $S$. This implies $\inf_{\relbd (1+r) S} v>0$. Thus, if $\veps_1$ satisfies
\[
\veps_1<\inf_{\relbd (1+r) S} v
\]
in addition to the inequalities specified above then $v(x)>\veps$ and $l_\veps(x)=\veps$ for any $x\in \relbd (1+r) S$ ($l_\veps(x)=\veps$ is a consequence of $(1+r)S\subset L$). This implies $\Omega_\varepsilon\cap L\subset (1+r)S$.

In the rest of the proof we may assume $\veps_1<\veps_1(1)$ so that
\begin{equation}
\label{SOmegaep2}
\Omega_\varepsilon\cap L\subset 2S.
\end{equation}

\medskip

\textit{Step 2. The centre of mass of $\Omega_\varepsilon$ is contained in $L$.}

To prove this claim we have  to prove that for each $w\in L^\perp\cap\R^n$ we have
\begin{equation}\label{baricenter_onL}
\int_{\Omega_\varepsilon}\langle x,w\rangle\,dx=0.
\end{equation}
Indeed, for $t\in\R$ with $|t|$ small, let 
\[
 F(t)=\int_{\{x\in\Omega: l_\varepsilon(x)+t\langle x,w\rangle-v(x)>0\}} (l_\varepsilon(x)+t\langle x,w\rangle -v(x))\ dx
\]
be the volume cut off by the hyperplane in $\R^{n+1}$ that is the graph of $x\mapsto l_\varepsilon(x)+t\langle x,w\rangle$ from $M$.
By definition of $H_\varepsilon$ and $l_\varepsilon$, $F$ has a local minimum at $t=0$. We have 
\begin{multline*}
  \frac{F(t)-F(0)}{t}=\int_{\{x\in\Omega: l_\varepsilon(x)-v(x)>0\}} \langle x,w\rangle\ dx\\
  +\int_\Omega\Big( \frac{l_\varepsilon(x)-v(x)}{t}+\langle x,w\rangle  \Big)\Big(1_{\{x: l_\varepsilon(x)+t\langle x,w\rangle-v(x)>0\}}-1_{\{x: l_\varepsilon(x)-v(x)>0\}}\Big)\ dx.
\end{multline*}
The set where $1_{\{x: l_\varepsilon(x)+t\langle x,w\rangle-v(x)>0\}}-1_{\{x: l_\varepsilon(x)-v(x)>0\}}$ differs from $0$ is contained in  
\[
A_t=\{x\in\Omega :\left|l_\varepsilon(x)-v(x)\right|<|t \langle x,w\rangle|\}
\]
and there exists $c$ independent on $t$ such that $\mathcal{H}^n(A_t)< c t$ and $\sup_{A_t}|l_\varepsilon(x)-v(x)|< ct$. As $F$ has a local minimum at $t=0$, we have
\[
0= \frac{d F}{dt}(0)=\int_{\Omega_\varepsilon} \langle x,w\rangle\ dx,
\]
which proves \eqref{baricenter_onL}.

\medskip

\textit{Step 3. For any $\varepsilon\in(0,\varepsilon_1)$ there exists an ellipsoid $E_\varepsilon$ centred at the origin such that
\begin{equation}
\label{Omegaepsellipsoid}
\frac1{8 n^3}E_\varepsilon\subset \Omega_\varepsilon\subset E_\varepsilon.
\end{equation}}

Lemma 2.3.3 in \cite{SCH} proves that any $k$-dimensional convex body contains its reflection, with respect to its centre of mass, scaled, with respect to the same centre of mass, by $1/k$. 
From the fact that the centre of mass of $\Om_\veps$ belongs to $L$ we deduce that
\begin{equation}
\label{Omegaepsnproj}
-(\Omega_\varepsilon|L^\bot)\subset n (\Omega_\varepsilon|L^\bot).
\end{equation}

According to Loewner's or John's theorems, there exists an ellipsoid $\widetilde{E}$ centred at the origin and 
$z_1\in\Omega_\varepsilon$ such that
$$
z_1+\frac1n\,\widetilde{E}\subset \Omega_\varepsilon\subset z_1+\widetilde{E}.
$$
It follows from (\ref{Omegaepsnproj}) that there exists $z_2\in\Omega_\varepsilon$ such that
$z_2|L^\bot=\frac{-1}n\,z_1|L^\bot$. In particular, 
$y_1=\frac1{n+1}z_1+\frac{n}{n+1}z_2\in\Omega_\varepsilon$ verifies $y_1|L^\bot=o$, or in other words, 
$y_1\in L\cap \Omega_\varepsilon$. In addition,
$$
y_1+\frac1{2n^2}\,\widetilde{E}\subset \frac1{n+1}
\left(z_1+\frac1n\,\widetilde{E}\right)+\frac{n}{n+1}z_2\subset \Omega_\varepsilon.
$$
Let $m={\rm dim}\,L\leq n-1$. Since $y_1\in L\cap \Omega_\varepsilon$ and (\ref{SOmegaep2}) imply $\frac12\,y_1\in S$, and since the origin is the centroid of $S$, we deduce that $y_2=\frac{-1}{2m}\,y_1\in S$. As $2m+1<2n$, we have
$$
\frac1{4n^3}\,\widetilde{E}\subset \frac1{2m+1}
\left(y_1+\frac1{2n^2}\,\widetilde{E}\right)+\frac{2m}{2m+1}y_2\subset \Omega_\varepsilon.
$$
As $\Omega_\varepsilon\subset 2\widetilde{E}$ follows from $o\in z_1+\widetilde{E}$, we may choose $E_\varepsilon=2\widetilde{E}$, proving (\ref{Omegaepsellipsoid}).

\medskip

\textit{Step 4. Application of Lemma~\ref{MongeAmperepointinside} to $v-l_\veps$ and $\Om_\veps$ and contradiction.}

We observe that
\begin{equation}
\label{vminusl}
v(x)-l_\varepsilon(x)=
\left\{\begin{array}{rl}
0&\mbox{ if $x\in\partial \Omega_\varepsilon$}\\
-\varepsilon& \mbox{ if $x\in S$}.
\end{array} \right.
\end{equation}

Let $\nu$ denote the Monge-Amp\`ere measure $\mu_{(v-l_\varepsilon)}$ restricted to $\Omega_\varepsilon$.  If  $\Omega_0$ is an open set such that $\Omega_\varepsilon\subset \Omega_0\subset{\rm cl}\,\Omega_0\subset \Omega$, then the set $N_v(\Omega_0)$ is bounded and this implies 
\[
\nu(\Omega_\ve)=\mathcal{H}^{n}(N_{(v-l_\varepsilon)}(\Omega_\varepsilon))\leq \mathcal{H}^{n}(N_v(\Omega_0))<\infty.
\]

Let  $t=1/(8 n^3)$. Formula \eqref{Omegaepsellipsoid} yields $tE_\varepsilon\subset\Omega_\ve\subset E_\varepsilon$. 
Let us prove that 
\begin{equation}
\label{bttau}
\nu(t\Omega_\ve)\geq b \nu(\Omega_\ve) \mbox{ \ for $b=\tau_1t^n/\tau_2$.}
\end{equation}
 The function  $v$ is convex and attains its minimum at $o$, thus $v(x)\geq v(tx)$ for any $x\in \Omega_\ve$.
By this fact, the monotonicity of $\psi$, \eqref{utau1tau2} and the assumptions on $S$, we deduce that
\begin{align*}
\nu(t\Omega_\ve)=\nu(t(\Omega_\ve\setminus S))&\geq\tau_1\int_{t(\Omega_\varepsilon\setminus S)}\psi(v(x))\,dx\\
&=\tau_1t^n \int_{\Omega_\varepsilon\setminus S}\psi(v(tz))\,dz\\
&\geq\tau_1t^n \int_{\Omega_\varepsilon\setminus S}\psi(v(z))\,dz\\
&\geq \frac{\tau_1t^n}{\tau_2}\,\nu(\Omega_\varepsilon\setminus S)=\frac{\tau_1t^n}{\tau_2}\,\nu(\Omega_\varepsilon)
\end{align*}
proving (\ref{bttau}).

Let $z\in\relbd S$. We claim that when $\veps\in(0,\veps_1(r))$ then $(z+rE_\veps)\cap\pa\Om_\veps\neq\emptyset$. This is a consequence of the second and third inclusion in~\eqref{inclusions_Om_eps}.
Indeed, since $o\in S\subset\Om_\veps\subset E_\veps$, there exists $q_\veps>0$ such that $(1+q_\veps)z\in\pa E_\veps$. The set $z+r E_\veps$ contains the segment $[z, z+r(1+q_\veps)z]$. Since $q_\veps>0$, that segment contains the segment $[z, (1+r)z]$. The second and third inclusion in~\eqref{inclusions_Om_eps} imply $[z, (1+r)z]\cap\pa\Om_\veps\neq\emptyset$. This proves the claim.

Lemma~\ref{MongeAmperepointinside} applies to this situation with $s=r$. Since $v(z)-l_\veps(z)=v(o)-l_\veps(o)=-\veps$ (see~\eqref{vminusl}), \eqref{formula_lemmaTW_3} yields
\[
1=\frac{|v(z)-l_\veps(z)|}{|v(o)-l_\veps(o)|}\leq \frac{c_1}{c_0}r^{1/n}.
\] 
Since $r$ can be any positive number, we have reached a contradiction.
\end{proof}

We will actually use the following consequence of Proposition~\ref{ustrictconvex}.
 
\begin{corollary}\label{ChouWang_ustrictconvex}
Let $\tau_2>\tau_1>0$, and let $g$ be a function defined on an open convex set $\Omega\subset\R^n$, $n\geq 2$, such that $\tau_2>g(x)>\tau_1$ for $x\in\Omega$.  For $p<1$, let $v$ be a non-negative convex solution of 
\begin{equation*}
v^{1-p}\det D^2 v= g \quad\text{ in $\Omega$}.
\end{equation*}
If $S=\{x\in \Omega:v(x)=0\}$ is non-empty, compact and $\mu_v(S)=0$, 
and $v$ is locally strictly convex on $\Omega\backslash S$, then $S$ is a point.
\end{corollary}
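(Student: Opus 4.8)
The plan is to deduce this immediately from Proposition~\ref{ustrictconvex}, once the equation is rewritten away from $S$ and the hypotheses are matched. First I would reduce to the case of a bounded domain: since $S$ is compact and contained in the open set $\Omega$, choose a bounded open convex set $\Omega'$ with $S\subset\Omega'\subset\cl\Omega'\subset\Omega$. Being convex and finite on $\Omega$, the function $v$ is continuous there, so it restricts to a continuous non-negative convex function on $\cl\Omega'$; moreover $\{x\in\Omega':v(x)=0\}=S$ is still non-empty and compact, and $v$ is locally strictly convex on $\Omega'\setminus S$. Passing from $\Omega$ to $\Omega'$ does not change the Monge-Amp\`ere measure on Borel subsets of $\Omega'$, because for a convex function the subgradient at an interior point of its domain is determined by the values of the function in an arbitrarily small neighbourhood of that point; in particular $\mu_v(S)=0$ for the Monge-Amp\`ere measure associated with $\Omega'$ as well.

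Next I would rewrite the equation on $\Omega'\setminus S$. By the definition of a solution in the Alexandrov sense, $v^{1-p}\,d\mu_v=g\,d\mathcal H^n$ on $\Omega'$; on $\Omega'\setminus S$ we have $v>0$, so this reads $\det\nabla^2 v=g\,v^{p-1}$ in the sense of measure, whence, using $\tau_1<g<\tau_2$,
\[
\tau_1\,v^{p-1}\le\det\nabla^2 v\le\tau_2\,v^{p-1}
\]
in the sense of measure on $\Omega'\setminus S$. Set $\psi(t)=t^{p-1}$ for $t>0$. Since $p<1$, the exponent $p-1$ is negative, so $\psi:(0,\infty)\to(0,\infty)\subset[0,\infty)$ is monotone decreasing and not identically zero, and the displayed inequality is precisely \eqref{utau1tau2} for $v$ with this choice of $\psi$. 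It remains to check that $\dim S\le n-1$: since $1-p>0$ we have $0^{1-p}=0$, so the measure $v^{1-p}\,d\mu_v$ vanishes on $S$, and hence $\int_S g\,d\mathcal H^n=0$; as $g>\tau_1>0$ this forces $\mathcal H^n(S)=0$, and since $S$ is convex, $\dim S\le n-1$.

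All hypotheses of Proposition~\ref{ustrictconvex} are now verified for $\Omega'$, $v$, $\psi$ and the constants $\tau_1,\tau_2$, so that proposition yields that $S$ is a point, which is the claim. I do not expect a real obstacle here: the substance is entirely contained in Proposition~\ref{ustrictconvex}, and the only two points needing a little care are the reduction to a bounded domain (with the remark that it affects neither the zero set nor the Monge-Amp\`ere measure) and the deduction of $\mathcal H^n(S)=0$, i.e. $\dim S\le n-1$, from the equation.
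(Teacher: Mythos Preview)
Your proof is correct and follows essentially the same approach as the paper: reduce to Proposition~\ref{ustrictconvex} with $\psi(t)=t^{p-1}$, the only nontrivial hypothesis to check being $\dim S\le n-1$, which follows because $v^{1-p}$ vanishes on $S$ while $g>\tau_1>0$. You are somewhat more careful than the paper in passing to a bounded $\Omega'$ with $\cl\Omega'\subset\Omega$ (so that $v$ is defined on the closure, as Proposition~\ref{ustrictconvex} requires) and in spelling out the $\mathcal H^n(S)=0$ argument, but the substance is identical.
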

\begin{proof} All we have to check that ${\rm dim}\, S\leq n-1$. It follows from the fact that the left-hand side of the differential equation is zero on $S$, while the right-hand side is positive.
\end{proof}

The following result by L.~Caffarelli (see Theorem~1 and Corollary~1 in \cite{Caf90a}) is the key in handling the regularity and strict convexity of the part of the boundary 
of a convex body $K$ where the support function at some normal vector is positive.

\begin{theorem}[Caffarelli]
\label{Caffarelli-smooth}
Let $\lambda_2>\lambda_1>0$,  and let $v$ be a convex function on an open convex set $\Omega\subset \R^n$ such that 
\begin{equation}\label{aggiunta}
\lambda_1\leq \det D^2v\leq \lambda_2
\end{equation}
in the sense of measure.
\begin{enumerate}[(i)]
\item If $v$ is non-negative and
$S=\{x\in\Omega:\,v(x)=0\}$ is not a point, then $S$ has no extremal point in $\Omega$.
\item If $v$ is strictly convex, then $v$ is $C^1$.
\end{enumerate}
\end{theorem}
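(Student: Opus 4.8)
The plan is to prove part~(i) by the Monge--Amp\`ere renormalisation argument that underlies Proposition~\ref{ustrictconvex} --- John's theorem together with the estimate of Lemma~\ref{MongeAmperepointinside} --- and to deduce part~(ii) from part~(i) by Legendre duality.

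\emph{Part~(i).} Argue by contradiction: suppose $S=\{v=0\}$ is not a point but has an extremal point $x_0\in\Omega$; put $L=\aff S$ (so $x_0\in L$), and, after a routine localisation, assume $v$ is continuous on $\cl\Omega$ and that the sections below are bounded and compactly contained in $\Omega$. Following the proof of Proposition~\ref{ustrictconvex}, for small $\varepsilon>0$ let $\ell_\varepsilon$ be the affine function whose graph contains $L+\varepsilon e$ ($e$ the vertical unit vector) and cuts off minimal volume from the convex body $M$ used there, and put $\Omega_\varepsilon=\{v<\ell_\varepsilon\}$; by the minimality the centroid of $\Omega_\varepsilon$ lies in $L$, and then, exactly as in that proof, one finds an ellipsoid $E_\varepsilon$ centred at a point $o_\varepsilon\in L$ with $\delta_n E_\varepsilon\subset\Omega_\varepsilon\subset E_\varepsilon$ for a dimensional constant $\delta_n>0$. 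Since $\ell_\varepsilon\equiv\varepsilon$ on $L$ and $v\ge0$ vanishes on $S$, we have $|v-\ell_\varepsilon|\le\varepsilon$ on $L\cap\Omega_\varepsilon$, with equality on $S\ni x_0$. Because $\lambda_1\,\mathcal H^n\le\mu_v\le\lambda_2\,\mathcal H^n$, the dilation--balance hypothesis of Lemma~\ref{MongeAmperepointinside}~\eqref{MApi_ii} holds automatically for $v-\ell_\varepsilon$ on $\Omega_\varepsilon$, so that lemma at $o_\varepsilon$ gives $\varepsilon\ge c_1\lambda_1^{1/n}\mathcal H^n(\Omega_\varepsilon)^{2/n}$, while Lemma~\ref{MongeAmperepointinside}~\eqref{MApi_i} at $z=x_0$ gives $\varepsilon\le c_0\,s_\varepsilon^{1/n}\lambda_2^{1/n}\mathcal H^n(\Omega_\varepsilon)^{2/n}$, where $s_\varepsilon$ is the least scale with $(x_0+s_\varepsilon E_\varepsilon)\cap\partial\Omega_\varepsilon\neq\emptyset$. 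Comparing these two inequalities forces $s_\varepsilon^{1/n}\ge c_1\lambda_1^{1/n}/(c_0\lambda_2^{1/n})$, a fixed positive number. On the other hand, fix $y_0\in S\setminus\{x_0\}$ and set $d=x_0-y_0$; since $x_0$ is extremal, $v>0$ on the ray $\{x_0+td:t>0\}$, so this ray meets $\partial\Omega_\varepsilon$ at a distance $\varrho_\varepsilon\to0$ from $x_0$ as $\varepsilon\to0$ (as $\Omega_\varepsilon$ contracts towards $S$), whereas $E_\varepsilon\supset[x_0,y_0]$ has width at least $|x_0-y_0|$ in the direction $d$; hence $s_\varepsilon\le 2\varrho_\varepsilon/|x_0-y_0|\to0$. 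This contradiction proves~(i).

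\emph{Part~(ii).} Pass to the Legendre conjugate $v^*$, which is convex and, on the interior of its domain, satisfies $\lambda_2^{-1}\le\det\nabla^2v^*\le\lambda_1^{-1}$ in the sense of measure; moreover $v$ is $C^1$ exactly when $v^*$ is strictly convex. If $v$ were not $C^1$ at some $x_0\in\Omega$, then $\partial v(x_0)$ would contain a non--degenerate segment, so $v^*$ would coincide on that segment with a supporting affine function $\ell$; then $w=v^*-\ell$ is non--negative and convex, $\det\nabla^2w$ again lies in $[\lambda_2^{-1},\lambda_1^{-1}]$, and $\{w=0\}=\partial v(x_0)$ is a non--empty compact convex set that is not a point. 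By part~(i) applied to $w$ it has no extremal point interior to $\operatorname{dom}v^*$; but it does have extremal points, and the lower bound $\det\nabla^2v\ge\lambda_1$ --- not merely the strict convexity of $v$ --- is exactly what keeps $\partial v(x_0)$ inside that interior. This contradiction proves~(ii).

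The step I expect to be the real obstacle is the claim $s_\varepsilon\to0$ in part~(i): that an extremal point of the contact set forces the normalised sections to place it asymptotically on their boundary. This is Caffarelli's localisation phenomenon, and it uses both sides of the bound $\lambda_1\le\det\nabla^2v\le\lambda_2$ in an essential way; in particular the cruder ``almost--ruled graph'' estimate behind Proposition~\ref{vanishingonsegment} does not suffice here, since that argument needs the density of the Monge--Amp\`ere measure to stay bounded away from zero, which fails near $S$ once one only knows $\det\nabla^2v\ge\lambda_1$. Secondary technical points are the localisation that keeps $\Omega_\varepsilon$ bounded and compactly contained --- where, unlike in Proposition~\ref{ustrictconvex}, the local strict convexity of $v$ is not available --- and, in part~(ii), verifying that $\partial v(x_0)$ is interior to $\operatorname{dom}v^*$.
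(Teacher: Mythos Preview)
The paper does not prove Theorem~\ref{Caffarelli-smooth}: it is quoted from Caffarelli~\cite{Caf90a} (Theorem~1 and Corollary~1 there) and used as a black box, so there is no argument in the paper to compare yours against.

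On your proposal itself: for part~(i), transplanting the renormalisation scheme of Proposition~\ref{ustrictconvex} is a natural idea, and once the sections $\Omega_\varepsilon$ are in place your comparison of the two estimates of Lemma~\ref{MongeAmperepointinside} is sound (in particular $s_\varepsilon\le 2\varrho_\varepsilon/|x_0-y_0|$ is correct, since the centred ellipsoid $E_\varepsilon$ contains both $x_0-o_\varepsilon$ and $-(y_0-o_\varepsilon)$, hence their midpoint $\tfrac12 d$). The real gap is the step you label ``routine localisation''. In Proposition~\ref{ustrictconvex} the sections $\Omega_\varepsilon$ are bounded and compactly contained in $\Omega$ \emph{because} $v$ is assumed locally strictly convex off $S$: that is what rules out supporting hyperplanes to the graph meeting both $S\times\{0\}$ and the top face, and hence guarantees that the minimising tilt $H_\varepsilon$ exists. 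Here that hypothesis is absent, and without it the construction can fail. Caffarelli's original argument avoids this by using the extremality of $x_0$ from the outset: one picks a hyperplane $\{x_n=0\}$ supporting $S$ at $x_0$ with $S\subset\{x_n\le 0\}$ and works with sections $\{v<\varepsilon(x_n+\delta)\}$, whose localisation near $x_0$ comes from extremality rather than from any strict convexity of $v$.

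For part~(ii), the Legendre-dual route can be made to work, but your attribution of the key step is off. After restricting to a bounded $\Omega'\ni x_0$ one has $\operatorname{dom}v^*=\mathbb R^n$, but the transferred bounds $\lambda_2^{-1}\le\det\nabla^2 v^*\le\lambda_1^{-1}$ hold only on the interior of the gradient image $\partial v(\Omega')$, and one must show $\partial v(x_0)$ lies there. This follows from the \emph{strict convexity} of $v$ (for each $y_0\in\partial v(x_0)$ the strictly convex function $x\mapsto v(x)-\langle y_0,x\rangle$ has its unique minimum at $x_0\in\operatorname{int}\Omega'$, and a small linear perturbation keeps the minimiser in $\operatorname{int}\Omega'$), not from the lower bound on $\det\nabla^2 v$ as you write. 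The lower bound is instead what gives the \emph{upper} bound on $\det\nabla^2 v^*$. Establishing both inequalities for $\mu_{v^*}$ in the Alexandrov sense still requires a careful measure-theoretic argument that you have not supplied.
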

We recall that \eqref{aggiunta} is equivalent to saying  that for each Borel set $\omega\subset \Omega$ we have
\[
 \lambda_1\mathcal{H}^n(\omega)\leq \mu_v(\omega)\leq \lambda_2\mathcal{H}^n(\omega),
\]
where $\mu_v$ has been defined in~\eqref{Monge-Ampere-measure}.

\section{Proof of Theorem~\ref{th_regularity}}
\label{sec-th_regularity}

The next lemma provides a tool for the proof of Theorem~\ref{th_regularity} (iii). The same result is also proved in Chou and Wang 
\cite{CW}; we present a short argument for the sake of completeness.

\begin{lemma}\label{oinside}
For $n\geq 2$ and $p\leq 2-n$, if $K\in{\mathcal K}_{0}^n$ and there exists $c>0$ such that
  $S_{K,p}(\omega)\geq c\,{\mathcal H}^{n-1}(\omega)$ for any Borel set $\omega\subset S^{n-1}$, then
$o\in{\rm int}\,K$.
\end{lemma}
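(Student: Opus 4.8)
The plan is to argue by contradiction: suppose $o \in \partial K$. The key idea is a dichotomy according to the dimension of the normal cone $N(K,o)$, together with a careful estimate of the $L_p$-surface area measure of small spherical neighborhoods of the normals at $o$. First I would recall that, since $o \in \partial K$, the set $\widetilde N = N(K,o) \cap S^{n-1}$ is a nonempty spherically convex subset of $S^{n-1}$, and for $u$ near $\widetilde N$ the support function $h_K(u)$ is comparable to the spherical distance $\mathrm{dist}(u,\widetilde N)$ raised to the first power — more precisely $h_K(u) \leq C\,\mathrm{dist}(u,\widetilde N)$, with a matching lower bound on a suitable cone of directions, because $h_K$ vanishes exactly on $N(K,o)$ and is a support function (Lipschitz, and sublinear, so its growth off the cone where it vanishes is at least linear along directions transverse to $L = \lin N(K,o)$, by convexity combined with $h_K \geq 0$).

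The main computation is then the following. Let $m = \dim N(K,o)$, so $1 \le m \le n$ (and in fact $m \le n-1$ is what we must exclude being impossible together with $m=n$; actually $m=n$ is impossible for $K$ a convex body, so $1\le m\le n-1$, but we do not need this a priori). For $\delta>0$ small, let $\omega_\delta = \{u \in S^{n-1} : \mathrm{dist}(u,\widetilde N) < \delta\}$. Since $\widetilde N$ has spherical dimension $m-1$, we have $\mathcal H^{n-1}(\omega_\delta) \asymp \delta^{\,n-m}$ as $\delta \to 0^+$. On the other hand, the hypothesis gives
\[
S_{K,p}(\omega_\delta) \;=\; \int_{\omega_\delta} h_K^{1-p}\, dS_K \;\geq\; c\,\mathcal H^{n-1}(\omega_\delta) \;\asymp\; \delta^{\,n-m}.
\]
I would then bound $S_{K,p}(\omega_\delta)$ from above. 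Using $h_K(u) \leq C\delta$ on $\omega_\delta$ and $1-p \geq n-1 > 0$, we get $h_K^{1-p} \leq C'\delta^{\,1-p}$ there, so
\[
S_{K,p}(\omega_\delta) \;\leq\; C' \delta^{\,1-p}\, S_K(\omega_\delta).
\]
The surface area measure $S_K(\omega_\delta)$ is the $\mathcal H^{n-1}$-measure of the part of $\partial K$ with normals in $\omega_\delta$; since these normals point nearly along $L^\perp$-directions and $o$ is an extreme-type point of $K$ in those directions, this piece of $\partial K$ shrinks as $\delta \to 0$, so at the very least $S_K(\omega_\delta) \leq S_K(S^{n-1}) = \mathcal H^{n-1}(\partial K) =: A < \infty$. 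Combining,
\[
c\,\mathcal H^{n-1}(\omega_\delta) \;\leq\; S_{K,p}(\omega_\delta) \;\leq\; C' A\, \delta^{\,1-p}.
\]
Thus $\delta^{\,n-m} \lesssim \delta^{\,1-p}$ as $\delta\to 0^+$, forcing $n-m \geq 1-p$, i.e. $p \geq 1-n+m$, i.e. $m \geq p+n-1$. Since $p \leq 2-n$ this only says $m \geq 1$, which is no contradiction yet — so I need the sharper bound on $S_K(\omega_\delta)$.

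The sharper input, and the step I expect to be the main obstacle, is the estimate $S_K(\omega_\delta) \leq C\,\delta$ (or at least $o(1)$ fast enough). This should follow because the boundary portion $\bigcup_{u \in \omega_\delta} F(K,u)$ is contained in a $C\delta$-neighborhood of the face $F(K,\widetilde N) \ni o$ whose dimension is at most $n-1-m$ (being the face dual to an $m$-dimensional normal cone), hence has $\mathcal H^{n-1}$-measure $O(\delta^{\,m})$; even the crude bound $O(\delta)$, valid since $m\ge 1$, suffices. Plugging $S_K(\omega_\delta) \leq C\delta$ gives $\delta^{\,n-m} \lesssim \delta^{\,2-p}$, hence $n-m \geq 2-p$, i.e. $m \leq n-2+p \leq n - 2 + (2-n) = 0$, contradicting $m \geq 1$. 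Therefore $o \notin \partial K$; since $o \in K$ by hypothesis ($K \in \mathcal K_0^n$), we conclude $o \in \mathrm{int}\,K$. I would take care to make the "$C\delta$-neighborhood of a lower-dimensional face" claim rigorous using \eqref{duality_body_support2} and the fact that $F(K,\widetilde N)$ is compact with $\dim F(K,\widetilde N) \le n-1-\dim N(K,o)$, and to handle the borderline $p = 2-n$ where the final inequality is $m \le 0$ strictly, still a contradiction.
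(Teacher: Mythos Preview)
Your scaling argument with the crude bound $S_K(\omega_\delta)\le A$ is fine when $p<2-n$ strictly (indeed then $(n-m)-(1-p)=n-m-1+p<0$ for every $m\ge 1$), but the lemma includes the endpoint $p=2-n$, and there your argument collapses precisely when $m=1$. Your proposed repair, the bound $S_K(\omega_\delta)\le C\delta$, does not hold. The duality inequality you invoke is off by one: for any $z\in\partial K$ one has $\dim F(K,N(K,z))\le n-\dim N(K,z)$, not $n-1-\dim N(K,z)$; equality occurs for polytopes, and more to the point, the construction in Example~\ref{non-strictly-convex} (valid for every $p$ just above $2-n$) has $m=1$ while $o$ sits in the relative interior of an $(n-1)$--dimensional facet, so $S_K(\omega_\delta)\ge S_K(\{e\})=\mathcal H^{n-1}(\text{facet})>0$ for all small $\delta$. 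Nothing short of the lemma itself rules this geometry out at $p=2-n$. (The linear-rate containment ``$\bigcup_{u\in\omega_\delta}F(K,u)\subset (F(K,\widetilde N))_{C\delta}$'' is also not true in general: for a body whose boundary near $o$ is the graph of $\|x\|^{4}$ one gets $F(K,u)$ at distance $\asymp\delta^{1/3}$ when $\mathrm{dist}(u,\widetilde N)=\delta$.)

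The way out is to \emph{reverse the direction of the estimate}. Rather than bounding $S_{K,p}(\omega_\delta)$ from above via $h_K^{1-p}\,dS_K$, bound $S_K$ from below: since $dS_K=h_K^{p-1}\,dS_{K,p}$ and $S_{K,p}\ge c\,\mathcal H^{n-1}$, for any Borel $V\subset S^{n-1}\setminus\widetilde N$ one has
\[
S_K(V)\;=\;\int_V h_K^{p-1}\,dS_{K,p}\;\ge\; c\int_V h_K^{p-1}\,d\mathcal H^{n-1}.
\]
Now pick a single $e\in\widetilde N$ generating an extremal ray of $N(K,o)$ and a half-neighborhood $V$ of $e$ on $S^{n-1}$ disjoint from $\widetilde N$. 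The Lipschitz bound $h_K(u)\le c_0\|u-e\|$ together with $p-1<0$ gives $h_K(u)^{p-1}\ge c_0^{p-1}\|u-e\|^{p-1}$, and $\int_V\|u-e\|^{p-1}\,d\mathcal H^{n-1}(u)$ diverges exactly when $p-1\le -(n-1)$, i.e.\ $p\le 2-n$, contradicting $S_K(V)<\infty$. This is the paper's argument; the discretization in $\delta$ is what costs you the endpoint.
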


\begin{proof}  
We suppose that  $o\in\partial K$ and seek a contradiction.  We choose $e\in N(K,o)\cap S^{n-1}$ such that
$\{ \lambda e :\lambda\geq 0\}$ is an extremal ray of $N(K,o)$. Let $H^+$ be a closed half space containing $\R e$ on the boundary such that
$N(K,o)\cap {\rm int} H^+=\emptyset$. Let
$$
V_0=S^{n-1}\cap (e+B^n)\cap {\rm int} H^+.
$$
It follows by the condition on $S_{K,p}$ that
\begin{equation}
\label{honV0}
c\int_{V_0}h_K(u)^{p-1}\,d{\mathcal H}^{n-1}\leq
\int_{V_0}h_K(u)^{p-1}\,dS_{K,p}= S_K(V_0)<\infty.
\end{equation}

However, since $h_K$ is convex and $h_K(e)=0$, there exists $c_0>0$ such that
$$
h_K(x)\leq c_0\|x-e\|\mbox{ \ for $x\in e+B^n$}. 
$$
We observe that the radial projection of $V_0$ onto the tangent hyperplane $e+e^\bot$ to $S^{n-1}$ at $e$
 is $e+V'_0$ for
$$
V'_0=e^\bot\cap (\sqrt{3}\,B^n)\cap {\rm int} H^+.
$$
If $y\in V'_0$, then $u=(e+y)/\|e+y\|$ verifies $\|u-e\|\geq  \|y\|/2$.
It follows that
\begin{eqnarray*}
\int_{V_0}h_K(u)^{p-1}\,d{\mathcal H}^{n-1}&\geq&
c_0^{p-1}\int_{V_0}\|u-e\|^{p-1}\,d{\mathcal H}^{n-1}(u)\\
&\geq&
\frac{c_0^{p-1}}2\int_{V'_0}\frac{\|y\|^{p-1}}{(1+\|y\|^2)^{n/2}}\,d{\mathcal H}^{n-1}(y)\\
&\geq &
\frac{c_0^{p-1}}{2^{n+1}}\int_{V'_0}\|y\|^{p-1}\,d{\mathcal H}^{n-1}(y)=\infty
\end{eqnarray*}
as $p\leq 2-n$. This contradicts \eqref{honV0}, and hence verifies the lemma.
\end{proof}

\begin{proof}[Proof of Theorem~\ref{th_regularity}]
\emph{Claim~\eqref{th_regularity_a_}.} 
For $u_0\in S^{n-1}\backslash N(K,o)$, we choose a spherically convex open neighbourhood $\Omega_0$ of $u_0$ on $S^{n-1}$ such that for any $u\in{\rm cl}\,\Omega_0$, we have $\langle u,u_0\rangle>0$ and $u\not\in N(K,o)$. Let $\Omega\subset u_0^\bot$ be defined in a way such that
$u_0+\Omega$ is the radial image of $\Omega_0$ into $u_0+u_0^\bot$, and let $v$ be the function on $\Omega$  defined as in Lemma~\ref{MongeAmpereRn-lemma} with $h=h_K$. Since $h_K$ is positive and continuous on ${\rm cl}\,\Omega$, we deduce from Lemma~\ref{MongeAmpereRn-lemma} that there exist $\lambda_2>\lambda_1>0$ depending on $K$, $u_0$ and $\Omega_0$ such that
\begin{equation}
\label{vzinXsmooth}
\lambda_1\leq \det D^2v\leq \lambda_2
\end{equation}
on $\Omega$. 

First we claim that
\begin{equation}
\label{zinXsmooth}
\mbox{ if  $z\in\partial K$ and $N(K,z)\not\subset N(K,o)$, then
$z$ is a $C^1$-smooth point.}
\end{equation}
We suppose that ${\rm dim}\,N(K,z)\geq 2$, and seek a contradiction. Since $N(K,z)$ is a closed convex cone such that $o$ is an extremal point, the property $N(K,z)\not\subset N(K,o)$ yields an 
$e\in (N(K,z)\cap S^{n-1})\backslash N(K,0)$ generating an extremal ray of $N(K,z)$. 
We apply the construction above for $u_0=e$. The convexity of $h_K$ and~\eqref{duality_body_support1}  imply 
$h_K(x)\geq \langle z,x\rangle$ for $x\in\R^n$, with equality if and only if $x\in N(K,z)$.
We define $S\subset \Omega$ by $S+e=N(K,z)\cap(\Omega+e)$ and hence $o$ is an extremal point of $S$.
It follows that the function $\tilde{v}$  defined by $\tilde{v}(y)=v(y)-\langle z,y+e\rangle$ is non-negative on $\Omega$,
satisfies \eqref{vzinXsmooth}, and
$$
S=\{y\in\Omega:\, \tilde{v}(y)=0\}.
$$
These properties contradict Caffarelli's Theorem~\ref{Caffarelli-smooth} (i) as  $o$ is an extremal point of $S$, and in turn we conclude \eqref{zinXsmooth}.

Next we show that
\begin{equation}
\label{u0notinNKo}
\mbox{ $h_K$ is differentiable at any $u_0\in S^{n-1}\backslash N(K,o)$.}
\end{equation}
We apply again the construction above for $u_0$. If $u\in\Omega_0$ and $z\in F(K,u)$ clearly $K$ is $C^1$-smooth at $z$ (i.e. $N(K,z)$ is a ray) by~\eqref{zinXsmooth}. Therefore, by~\eqref{notstrictlyconvex},  $v$ is strictly convex on $\Omega$ and Caffarelli's Theorem~\ref{Caffarelli-smooth}~(ii) yields that $v$ is $C^1$ on $\Omega$. In turn, we conclude \eqref{u0notinNKo}. 

In addition, $F(K,u)$ is a unique $C^1$-smooth point for $u\in\Omega_0$ (see \eqref{duality_body_support2}), yielding
that $\Omega_*=\cup\{F(K,u):\,u\in \Omega_0\}$ is an open subset of  $\pa K$. 
Therefore $\Omega_*\subset X$, any point of $\Omega_*$ is $C^1$-smooth 
(by \eqref{notstrictlyconvex}) and $\Omega_*$ contains no segment
(by \eqref{duality_body_support2}),
  completing the proof of Claim~\eqref{th_regularity_a_}.
 
\emph{Claim~\eqref{th_regularity_b_}.} We suppose that $o\in\pa K$ is $C^1$-smooth, and there exists
 $z\in \pa K$ such that $K$ is not $C^1$-smooth at $z$.
 Claim~\eqref{th_regularity_a_} yields that $z\in X_0$, and hence $N(K,z)\subset N(K,o)$, which is a contradiction, verifying Claim~\eqref{th_regularity_b_}.

\emph{Claim~\eqref{th_regularity_c_}.} This is a consequence of Lemma~\ref{oinside} and Claim~\eqref{th_regularity_a_}.

\emph{Claim~\eqref{th_regularity_x_}.} This is a consequence of Lemma~\ref{MongeAmpereRn-lemma}, Claim~\eqref{th_regularity_a_} and Caffarelli \cite{Caf90b}.

\end{proof}

\begin{example}\label{non-strictly-convex} If $n\geq2$ and $p\in(-n+2,1)$, then there exists $K\in{\mathcal K}_{0}^n$ with $C^1$ boundary such that $o$ lies in the relative interior of a facet of $\partial K$ and $dS_{K,p}=f\,d{\mathcal H}^{n-1}$ for a strictly positive continuous $f:\,S^{n-1}\to\R$.

Let $q=(p+n-1)/(p+n-2)$. We have $q>1$. Let 
\[
 g(r)=\begin{cases}
         (r-1)^q&\text{when $r\geq1$;}\\
         0&\text{when $r\in[0,1)$;}
       \end{cases}
\]
and $\bar g(x_1,\dots,x_{n-1})=g(\|(x_1,\dots,x_{n-1})\|)$. Let $K\in{\mathcal K}_{0}^n $ be such that $K\cap \{x : x_n\leq1\}=\{x : 1\geq x_n\geq \bar g(x_1,\dots,x_{n-1})\}$ and $\d K\cap \{x : x_n>0\}$ is a $C^2$ surface with Gauss curvature positive at every point.
Clearly $K\cap \{x : x_n=0\}$ is a $(n-1)$-dimensional face of $K$ which contains $o$ in its relative interior and has unit outer normal $(0,\dots,0,-1)$. 

To prove that $dS_{K,p}=f\,d{\mathcal H}^{n-1}$ for a positive continuous $f:\,S^{n-1}\to\R$, it suffices to prove that there is a neighbourhood of the South pole where $dS_{K,p}/d{\mathcal H}^{n-1}$ is continuous and bounded from above and below by positive constants. 
Let $h$ be the support function of $K$ and, for $y\in\R^{n-1}$, let $v(y)=h(y,-1)$ be the restriction of $h$ to the hyperplane tangent to $S^{n-1}$ at the South pole. It suffices to prove that  in a neighbourhood $U$ of $o$, $v$ satisfies the equation $v^{1-p}\det D^2 v=G$ with a function $G$ which is bounded from above and below by positive constants.

If $y\in U\setminus\{o\}$ we have 
\begin{equation}\label{non-strictly_convex_f1}
 v(y)=h(y,-1)=\langle (x',\bar g(x')),(y,-1)\rangle\quad\text{ where }\quad D \bar g(x')=y.
\end{equation}
If $U$ is sufficiently small then $v(y)$ depends only on $\|y\|$. Let $y=(z,0,\dots,0)$, with $z>0$ small and let $r=1+(z/q)^{1/(q-1)}$. We have 
\[
 D \bar g(r,0,\dots,0))=(z,0,\dots,0)
\]
and \eqref{non-strictly_convex_f1} gives
\begin{align*}\label{non-strictly_convex_f2}
 v(z,0,\dots,0)=&r q(r-1)^{q-1}-(r-1)^q\\
 =&z+\frac{q-1}{q^{n-1+p}}z^{n-1+p}.
\end{align*}
(Note that $n-1+p>1$.) Clearly $v(0,\dots,0)=h(0,\dots,0,-1)=0$.
When $z>0$ we have 
\begin{align*}
 &v_{y_1y_1}=\frac{q-1}{q^{n-1+p}}(n-1-p)(n-2-p)z^{n-3+p}\\
 &v_{y_iy_i}=\frac1{z}+\frac{q-1}{q^{n-1+p}}(n-1-p)z^{n-3+p} \quad\quad\quad\text{when $i\neq1$}\\
 &v_{y_iy_j}=0\quad\quad\quad\text{when $i\neq j$,}
\end{align*}
and, as $z\to0^+$
\[
v(z,0,\dots,0)^{1-p}\det D^2 v(z,0,\dots,0)=c+o(1),
\]
for a suitable constant $c>0$.
This implies the existence of a function $G$ positive and continuous on $U$ such that
\[
\mathcal{H}^{n-1}\big(N_v(\omega\cap\{v>0\})\big)=\int_{\omega\cap\{v>0\}} nG(y)v(y)^{p-1}\ dy.
\]
for any Borel set $\omega\subset U$.  To conclude the proof that $v$ is a solution in the sense of Alexandrov of $v^{1-p} \det D^2 v=G$ in $U$ it remains to prove that $\mathcal{H}^{n-1}\big(\{y\in U : v(y)=0\}\big)=0$, but this is obvious since $\{y\in U : v(y)=0\}=\{o\}$.

We remark that $h$ is not a solution of~\eqref{MongeAmpere_CW} because~\eqref{sol_alex_c} fails.
\end{example}

\section{Proofs of Theorem~\ref{th_regularity-new} and Corollary~~\ref{cor_regularity_new}}
\label{sec-th_regularity-new}

\begin{proof}[Proof of Theorem~\ref{th_regularity-new}]
We may assume that $o\in \pa K$ since otherwise $\partial K$ is $C^1$  by Theorem~\ref{th_regularity}. Let $e\in N(K,o)\cap S^{n-1}$ be such that $\langle u,e\rangle>0$ for any
$u\in N(K,o)\cap S^{n-1}$. Let  $v$ be defined on $\Omega=e^\bot$ as in Lemma~\ref{MongeAmpereRn-lemma} with $h=h_K$ and let $S=\{x\in e^\bot:\,v(x)=0\}$. We have
\begin{equation}\label{th_reg_new_a}
 S+e=N(K,o)\cap(e^\bot +e),
\end{equation}
by~\eqref{duality_body_support1}.
If $K$ is not $C^1$-smooth at $o$ then $\dim S\geq 1$ and, by Proposition~\ref{vanishingonsegment}, $p\geq n-4$ (note that here the dimension of the ambient space is $n-1$). This proves Theorem~\ref{th_regularity-new}~(i).

To prove Theorem~\ref{th_regularity-new}~(ii) we observe that
\[
 N_{h_K}(e+S)=\bigcup_{u\in N(K,o)}F(K,u)=X_0,
\]
where $X_0$ is defined as in Theorem~\ref{th_regularity}~(i). The equality on the left in this formula follows by~\eqref{duality_body_support2} and the equality on the right follows by Theorem~\ref{th_regularity}~(i).
Thus,
$$
N_v(S)=X_0|e^\bot,
$$
and if $\mathcal{H}^{n-1}(X_0)=0$ then $\mu_v(S)=0$. We observe that $S$ is compact, by~\eqref{th_reg_new_a}, that $v$ is locally strictly convex, by
Theorem~\ref{th_regularity}~(i), and that ${\rm dim}\,S\leq n-2$, by
\eqref{NKosmall}.
Hence,  Theorem~\ref{th_regularity-new}  (ii) follows by Corollary~\ref{ChouWang_ustrictconvex} and ~\eqref{th_reg_new_a}. 
\end{proof}

\begin{proof}[Proof of Corollary~\ref{cor_regularity_new}]
Claim~\eqref{th_regularity_g_} is an immediate consequence of~\eqref{duality_body_support1}, Proposition~\ref{vanishingonsegment} and Lemma~\ref{MongeAmpereRn-lemma}. This claim implies that when $n=4$ or $n=5$ and $\partial K$ is not $C^1$ then $\dim N(K,o)=2$. In this case  $N(K,o)\cap S^{n-1}$ is a closed arc: let $e_1$ and $e_2$ be its endpoints. If $u\in N(K,o)\cap S^{n-1}$, $u\neq e_1$, $u\neq e_2$, then $F(K,u)$ is contained in the intersection of the two supporting hyperplanes $\{x\in\R^n: \langle x,e_i\rangle=h_K(e_i)\}$, $i=1,2$. Thus,
\[
 \mathcal{H}^{n-1}\Big(\bigcup\{F(K,u) : u\in N(K,o)\cap S^{n-1}, u\neq e_1, u\neq e_2\}\Big)=0.
\]
Therefore  $\dim F(K,e_1)=n-1$ or $\dim F(K,e_2)=n-1$, because otherwise 
\[
\bigcup\{F(K,u) : u\in N(K,o)\cap S^{n-1}\},
\]
which coincides with $X_0$ by~Theorem~\ref{th_regularity}~(i), has $(n-1)$-dimensional Hausdorff measure equal to zero and $\partial K$ is $C^1$ by Theorem~\ref{th_regularity-new}~(ii).
\end{proof}

\noindent{\bf Acknowledgement } We are grateful to the referees. Their observations substantially improved the paper.

\end{document}